\documentclass[12pt,a4paper]{amsart}
\allowdisplaybreaks[1]

\usepackage[top=30truemm,bottom=30truemm,left=25truemm,right=25truemm]{geometry}

\DeclareFontEncoding{OT2}{}{}
\DeclareFontSubstitution{OT2}{cmr}{m}{l}

\DeclareFontFamily{OT2}{cmr}{\hyphenchar\font45 }
\DeclareFontShape{OT2}{cmr}{m}{l}{%
<5><6><7><8><9>gen*wncyr%
<10><10.95><12><14.4><17.28><20.74><24.88>wncyr10}{}

\DeclareMathAlphabet{\mathcyr}{OT2}{cmr}{m}{l}
\DeclareMathAlphabet{\mathcyb}{OT2}{cmr}{b}{l}
\SetMathAlphabet{\mathcyr}{bold}{OT2}{cmr}{b}{l}

\usepackage{amstext}
\usepackage{amsthm}
\usepackage{amssymb}

\newtheorem{thm}{Theorem}[section]
\newtheorem{lem}[thm]{Lemma}
\newtheorem{prop}[thm]{Proposition}
\newtheorem{cor}[thm]{Corollary}

\theoremstyle{definition}
\newtheorem{defn}[thm]{Definition}

\theoremstyle{remark}
\newtheorem{rem}[thm]{Remark}

\newcommand{\sha}{\mathbin{\widetilde{\mathcyr{sh}}}}
\newcommand{\sh}{\mathbin{\mathcyr{sh}}}

%%%%%%%%%%%%%%%%%%%%%%%%%%%%%%%%%%%%%%%%%%%%%%%%%%%%%%%%%%%%%%%%%%%%%%%%%%%%%%%%%%%%%%%%%%%%%%%%%%%%
\begin{document}

\title[Generating functions for Ohno type sums of FMZSVs and SMZSVs]{Generating functions for Ohno type sums of finite and symmetric multiple zeta-star values}

\author{Minoru Hirose}
\address[Minoru Hirose]{Faculty of Mathematics, Kyushu University
 744, Motooka, Nishi-ku, Fukuoka, 819-0395, Japan}
\email{m-hirose@math.kyushu-u.ac.jp}

\author{Hideki Murahara}
\address[Hideki Murahara]{Nakamura Gakuen University Graduate School,
 5-7-1, Befu, Jonan-ku, Fukuoka, 814-0198, Japan} 
\email{hmurahara@nakamura-u.ac.jp}

\author{Shingo Saito}
\address[Shingo Saito]{Faculty of Arts and Science, Kyushu University,
 744, Motooka, Nishi-ku, Fukuoka, 819-0395, Japan}
\email{ssaito@artsci.kyushu-u.ac.jp}

\keywords{Multiple zeta(-star) values, Finite multiple zeta(-star) values, Symmetric multiple zeta(-star) values, Ohno's relation, Oyama's relation}
\subjclass[2010]{Primary 11M32; Secondary 05A19}

\begin{abstract}
 Ohno's relation states that a certain sum, which we call an Ohno type sum, of multiple zeta values remains unchanged 
 if we replace the base index by its dual index.
 In view of Oyama's theorem concerning Ohno type sums of finite and symmetric multiple zeta values,
 Kaneko looked at Ohno type sums of finite and symmetric multiple zeta-star values and made a conjecture 
 on the generating function for a specific index of depth three.
 In this paper, we confirm this conjecture and further give a formula for arbitrary indices of depth three.
\end{abstract}

\maketitle

%%%%%%%%%%%%%%%%%%%%%%%%%%%%%%%%%%%%%%%%%%%%%%%%%%%%%%%%%%%%%%%%%%%%%%%%%%%%%%%%%%%%%%%%%%%%%%%%%%%%
\section{Introduction}
\subsection{Finite and symmetric multiple zeta(-star) values}
For positive integers $k_1,\dots,k_r$ with $k_r \ge2$, the multiple zeta values and the multiple zeta-star values are defined by 
\begin{align*}
 \zeta(k_1,\dots, k_r)
 &=\sum_{1\le n_1<\cdots <n_r} \frac{1}{n_1^{k_1}\cdots n_r^{k_r}} \in \mathbb{R}, \\
 \zeta^\star (k_1,\dots, k_r)
 &=\sum_{1\le n_1\le \cdots \le n_r} \frac{1}{n_1^{k_1}\cdots n_r^{k_r}} \in \mathbb{R}. 
\end{align*} 

Kaneko and Zagier \cite{KZ19} introduced the finite multiple zeta(-star) values and
the symmetric multiple zeta(-star) values. 
Set $\mathcal{A}:=\prod_p\mathbb{F}_p/\bigoplus_p\mathbb{F}_p$, where $p$ runs over all primes.
For positive integers $k_1,\dots,k_r$, the finite multiple zeta(-star) values are defined by
\begin{align*}
 \zeta_\mathcal{A}(k_1,\dots,k_r)
 &=\Biggl(\sum_{1\le m_1<\dots<m_r<p}\frac{1}{m_1^{k_1}\dotsm m_r^{k_r}}\bmod p\Biggr)_p\in\mathcal{A},\\
 \zeta_\mathcal{A}^{\star}(k_1,\dots,k_r)
 &=\Biggl(\sum_{1\le m_1\le\dots\le m_r<p}\frac{1}{m_1^{k_1}\dotsm m_r^{k_r}}\bmod p\Biggr)_p\in\mathcal{A}.
\end{align*}
Let $\mathcal{Z}$ be the $\mathbb{Q}$-linear subspace of $\mathbb{R}$ spanned by $1$ and all multiple zeta values. 
For positive integers $k_1,\dots,k_r$, we define the symmetric multiple zeta(-star) values by
\begin{align*}
 \zeta_\mathcal{S}(k_1,\dots,k_r)
 &=\sum_{i=0}^{r}(-1)^{k_{i+1}+\dots+k_r}\zeta(k_1,\dots,k_i)
 \zeta(k_r,\dots,k_{i+1})\bmod\zeta(2)\in\mathcal{Z}/\zeta(2)\mathcal{Z},\\
 \zeta_\mathcal{S}^{\star}(k_1,\dots,k_r)
 &=\sum_{i=0}^{r}(-1)^{k_{i+1}+\dots+k_r}\zeta^{\star}(k_1,\dots,k_i)\zeta^{\star}(k_r,\dots,k_{i+1})\bmod\zeta(2)\in\mathcal{Z}/\zeta(2)\mathcal{Z},
\end{align*}
where we understand $\zeta(\emptyset)=\zeta^{\star}(\emptyset)=1$.
Here, the symbols $\zeta$ and $\zeta^\star$ on the right-hand side mean the regularized values coming from harmonic regularization, 
i.e.\ real values obtained by taking constant terms of harmonic regularization 
as explained in \cite{IKZ06}. 

Denoting by $\mathcal{Z}_{\mathcal{A}}$ the $\mathbb{Q}$-vector subspace of $\mathcal{A}$ spanned by $1$ and all finite multiple zeta values, Kaneko and Zagier conjectured that there is an isomorphism between $\mathcal {Z}_{\mathcal{A}}$ and $\mathcal{Z}/\zeta(2)\mathcal{Z}$ as $\mathbb{Q}$-algebras such that $\zeta_{\mathcal{A}}(k_1,\ldots, k_r)$ and $\zeta_{\mathcal{S}}(k_1,\ldots, k_r)$ correspond to each other 
(for details, see \cite{Kan19} and \cite{KZ19}). 
In the sequel, the letter $\mathcal{F}$ stands for either $\mathcal{A}$ or $\mathcal{S}$.

%%%%%%%%%%%%%%%%%%%%%%%%%%%%%%%%%%%%%%%%%%%%%%%%%%%%%%%%%%%%%%%%%%%%%%%%%%%%%%%%%%%%%%%%%%%%%%%%%%%%
\subsection{Main results}
For a sequence $\boldsymbol{k}=(k_1,\dots,k_r)$, we call $\left|\boldsymbol{k}\right|=k_1+\cdots+k_r$ its weight and $r$ its depth.  
For two sequences $\boldsymbol{k}$ and $\boldsymbol{l}$ of the same depth, we denote by $\boldsymbol{k} \oplus \boldsymbol{l}$ the sequence obtained by componentwise addition. 
We call a (possibly empty) sequence of positive integers an index. 
%An index is said to be admissible if either it is empty or its last component is greater than $1$.
%
%We denote by $\mathcal{I}$ the $\mathbb{Q}$-linear space spanned by the indices.
Throughout this paper, we always assume that $\boldsymbol{e}$ runs over sequences of nonnegative integers having suitable depth. 

Ohno obtained the following remarkable result: 
\begin{thm}[Ohno's relation; Ohno \cite{Ohn99}] \label{ohno}
 For a nonempty index $\boldsymbol{k}$ whose last component is greater than $1$ and a nonnegative integer $m$, we have
 \begin{align*}
  \sum_{\left|\boldsymbol{e}\right|=m}
  \zeta (\boldsymbol{k}\oplus\boldsymbol{e}) 
  =\sum_{\left|\boldsymbol{e}\right|=m}
  \zeta (\boldsymbol{k}^\dagger \oplus\boldsymbol{e}),
 \end{align*}
 where $\boldsymbol{k}^\dagger$ is the dual index of $\boldsymbol{k}$ (see \cite{Ohn99} for a precise definition). 
\end{thm}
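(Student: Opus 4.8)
I would prove Theorem~\ref{ohno} by Ohno's original strategy: package the Ohno type sum into a one‑variable generating function, express that generating function as a single iterated integral over a simplex, and then apply the change of variables that realizes the duality $\boldsymbol{k}\mapsto\boldsymbol{k}^\dagger$.

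Recall the iterated integral representation of multiple zeta values: with $\omega_0(t)=dt/t$ and $\omega_1(t)=dt/(1-t)$, and with the word $\epsilon_1\cdots\epsilon_k\in\{0,1\}^k$ ($k=|\boldsymbol{k}|$) attached to $\boldsymbol{k}=(k_1,\dots,k_r)$ by concatenating the blocks $1\,0^{k_i-1}$, one has $\zeta(\boldsymbol{k})=\int_{0<t_1<\cdots<t_k<1}\omega_{\epsilon_1}(t_1)\cdots\omega_{\epsilon_k}(t_k)$. The substitution $t_i\mapsto 1-t_{k+1-i}$ preserves the simplex, has Jacobian of absolute value $1$, reverses the order of the forms and interchanges $\omega_0$ with $\omega_1$; on words this is exactly ``reverse and complement'', hence it carries the word of $\boldsymbol{k}$ to that of $\boldsymbol{k}^\dagger$. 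This is the standard proof of the case $m=0$ (the duality theorem), and the problem is to make the entire generating function transform the same way.

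Accordingly, set $\Phi(\boldsymbol{k};z):=\sum_{m\ge 0}z^m\sum_{|\boldsymbol{e}|=m}\zeta(\boldsymbol{k}\oplus\boldsymbol{e})\in\mathcal{Z}[[z]]$. Since $\boldsymbol{k}\oplus\boldsymbol{e}$ is obtained from $\boldsymbol{k}$ by appending $e_i$ extra copies of $\omega_0$ at the end of the $i$‑th block of the word, summing the resulting geometric‑type series block by block---using $\sum_{e\ge 0}z^e\int_{a<s_1<\cdots<s_e<b}\frac{ds_1}{s_1}\cdots\frac{ds_e}{s_e}=(b/a)^z$---shows that
\[
 \Phi(\boldsymbol{k};z)=\int_{0<t_1<\cdots<t_k<1}\omega_{\epsilon_1}(t_1)\cdots\omega_{\epsilon_k}(t_k)\cdot\Bigl(\prod_{j=2}^{r}\Bigl(\frac{t_{p_j}}{t_{p_j-1}}\Bigr)^{z}\Bigr)\,t_k^{-z},
\]
where $p_1<\cdots<p_r$ are the positions of the letters equal to $1$ in the word. (Equivalently $\Phi(\boldsymbol{k};z)=\sum_{0<n_1<\cdots<n_r}\prod_i\bigl(n_i^{\,k_i-1}(n_i-z)\bigr)^{-1}$, which is the same computation done on the series side.)

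Now apply $t_i\mapsto 1-t_{k+1-i}$ to this integral. The undecorated part becomes the integral attached to $\boldsymbol{k}^\dagger$, just as in the duality theorem; the decoration $\prod_j(t_{p_j}/t_{p_j-1})^z\cdot t_k^{-z}$ becomes a product of factors $(1-t)^{\pm z}$ at the mirrored positions. The step I expect to be the main obstacle is precisely to show that the resulting $(1-t)^{\pm z}$‑decorated integral again equals $\Phi(\boldsymbol{k}^\dagger;z)$, i.e.\ equals the $\boldsymbol{k}^\dagger$‑integral with its \emph{own} $t^{\pm z}$‑decoration: the substitution does not match the decorations on the nose, and one needs an extra identity between these two a priori different families of iterated integrals with a parameter. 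I would establish it by induction on $|\boldsymbol{k}|+\dep(\boldsymbol{k})$, peeling off the innermost block and reusing the geometric‑series identity above (now producing the generating function of an index of smaller weight), or alternatively by expanding $(1-t)^{\pm z}$ binomially and reorganizing the honest iterated integrals that appear via the shuffle product; a third option is to bypass the integral entirely and prove $\Phi(\boldsymbol{k};z)=\Phi(\boldsymbol{k}^\dagger;z)$ directly on the series side by a connected‑sum argument with a $z$‑dependent connector. Once $\Phi(\boldsymbol{k};z)=\Phi(\boldsymbol{k}^\dagger;z)$ is in hand, comparing the coefficients of $z^m$ gives Theorem~\ref{ohno}.
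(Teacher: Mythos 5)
The paper does not prove this statement at all---it is quoted verbatim from Ohno's paper \cite{Ohn99} as a known result---so there is no in-paper argument to compare yours against; I can only judge the proposal on its own terms. Your setup is correct and well chosen: the identity $\sum_{e\ge0}z^e\int_{a<s_1<\cdots<s_e<b}\prod ds_j/s_j=(b/a)^z$ does give the stated integral representation of $\Phi(\boldsymbol{k};z)$, the series form $\Phi(\boldsymbol{k};z)=\sum_{0<n_1<\cdots<n_r}\prod_i\bigl(n_i^{k_i-1}(n_i-z)\bigr)^{-1}$ is right, and you are also right that the substitution $t_i\mapsto1-t_{k+1-i}$ does \emph{not} carry the $t^{\pm z}$-decorated integral for $\boldsymbol{k}$ to the one for $\boldsymbol{k}^\dagger$: it produces $(1-t)^{\pm z}$ factors in the wrong places, which is exactly why Ohno's relation is strictly stronger than duality.

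The problem is that this is where your argument stops. The identity $\Phi(\boldsymbol{k};z)=\Phi(\boldsymbol{k}^\dagger;z)$ is not a technical lemma left over after the substitution---it \emph{is} Ohno's relation, merely repackaged (comparing coefficients of $z^m$ is immediate in both directions). So the proposal reduces the theorem to an equivalent restatement and then lists three candidate strategies (induction peeling off a block, binomial expansion of $(1-t)^{\pm z}$ plus shuffle reorganization, or a connected sum on the series side) without executing any of them. Each of these can in fact be made to work---the third is essentially the Seki--Yamamoto connected-sum proof, and the first is close in spirit to Ohno's original induction---but as written there is no argument, only a correct diagnosis of where the difficulty sits. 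To turn this into a proof you must commit to one of the three routes and carry it through; for instance, in the connected-sum approach you would need to exhibit the $z$-dependent connector explicitly, verify the transport relations that move one index letter at a time from $\boldsymbol{k}$ to $\boldsymbol{k}^\dagger$, and check the boundary cases, none of which is sketched here.
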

\begin{defn}[Hoffman's dual index]
 For a nonempty index $\boldsymbol{k}=(k_1,\dots,k_r)$, we define Hoffman's dual index of $\boldsymbol{k}$ by
 \begin{align*}
  \boldsymbol{k}^{\vee}=(\underbrace{1,\dots,1}_{k_1}+\underbrace{1,\dots,1}_{k_2}+1,\dots,1+\underbrace{1,\dots,1}_{k_r}).
 \end{align*}
\end{defn}
In contrast to Theorem \ref{ohno}, Oyama proved the following:
\begin{thm}[Oyama \cite{Oya15}] \label{ohnoF}
 For a nonempty index $\boldsymbol{k}$ and  a nonnegative integer $m$, we have
 \[ 
  \sum_{\left|\boldsymbol{e}\right|=m}
  \zeta_\mathcal{F} (\boldsymbol{k}\oplus\boldsymbol{e})
  =\sum_{\left|\boldsymbol{e}\right|=m}
  \zeta_\mathcal{F} ((\boldsymbol{k}^\vee \oplus\boldsymbol{e})^\vee ).
 \]
\end{thm}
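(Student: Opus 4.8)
The plan is to establish Theorem~\ref{ohnoF} for $\mathcal{F}=\mathcal{A}$ and $\mathcal{F}=\mathcal{S}$ at the same time, using only the two structural facts common to both families: the harmonic (stuffle) product formula, and the reversal relation $\zeta_{\mathcal{F}}(k_1,\dots,k_r)=(-1)^{k_1+\cdots+k_r}\zeta_{\mathcal{F}}(k_r,\dots,k_1)$. The first move is to pass to a combinatorial model. Encode a nonempty index $\boldsymbol{k}$ of weight $n$ by the word $u(\boldsymbol{k})\in\{a,b\}^{n-1}$ that has the letter $a$ at each internal $+$ and the letter $b$ at each comma when $\boldsymbol{k}$ is written out as a sum of ones as in the definition of $\boldsymbol{k}^{\vee}$; then Hoffman's dual $\boldsymbol{k}\mapsto\boldsymbol{k}^{\vee}$ is exactly the letter exchange $a\leftrightarrow b$. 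Unravelling the definitions gives $\sum_{|\boldsymbol{e}|=m}\zeta_{\mathcal{F}}(\boldsymbol{k}\oplus\boldsymbol{e})=\zeta_{\mathcal{F}}\bigl(\sum_{w}[w]\bigr)$, where $\zeta_{\mathcal{F}}$ is extended $\mathbb{Q}$-linearly, $w$ ranges over all words obtained from $u(\boldsymbol{k})$ by inserting $m$ new letters $a$ (the letters $b$ of $u(\boldsymbol{k})$ staying in place), and $[w]$ denotes the index attached to $w$; applying $a\leftrightarrow b$, the right-hand side of Theorem~\ref{ohnoF} equals $\zeta_{\mathcal{F}}$ of the analogous sum over words obtained from $u(\boldsymbol{k})$ by inserting $m$ new letters $b$ (the letters $a$ now staying in place). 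Thus Theorem~\ref{ohnoF} becomes the assertion that the $a$-thickenings of $u(\boldsymbol{k})$ and the $b$-thickenings of $u(\boldsymbol{k})$ have equal image under $\zeta_{\mathcal{F}}$ — a statement that is false for individual words but should become true after summation.

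I would prove this combinatorial identity by the connected-sum method. Introduce a two-index family of connected sums $Z_{\mathcal{F}}(\boldsymbol{p};\boldsymbol{q})$, each a finite $\mathbb{Q}$-linear combination of $\zeta_{\mathcal{F}}$-values formed from $\boldsymbol{p}$, the reversal of $\boldsymbol{q}$, and a binomial ``connector'' that records how the $m$ extra boxes are distributed between the two sides. The crux is a transport relation: a single application of the harmonic product formula should give $Z_{\mathcal{F}}(\boldsymbol{p};\boldsymbol{q})=Z_{\mathcal{F}}(\boldsymbol{p}';\boldsymbol{q}')$, where $\boldsymbol{q}'$ is $\boldsymbol{q}$ with its front entry removed and $\boldsymbol{p}'$ is $\boldsymbol{p}$ with the corresponding entry — under the $a\leftrightarrow b$ rule — attached. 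Iterating the transport relation the appropriate number of times carries the boundary configuration with everything on the $\boldsymbol{q}$-side, arranged to equal $\sum_{|\boldsymbol{e}|=m}\zeta_{\mathcal{F}}(\boldsymbol{k}\oplus\boldsymbol{e})$, to the one with everything on the $\boldsymbol{p}$-side, arranged to equal $\sum_{|\boldsymbol{e}|=m}\zeta_{\mathcal{F}}((\boldsymbol{k}^{\vee}\oplus\boldsymbol{e})^{\vee})$; the reversal relation is what lets the two ends be read as genuine $\zeta_{\mathcal{F}}$-values rather than as formal expressions in $\boldsymbol{p}$ and $\boldsymbol{q}$. (A less uniform alternative for $\mathcal{F}=\mathcal{S}$: substitute the harmonic-regularized iterated-integral series for each $\zeta$ occurring in the definition of $\zeta_{\mathcal{S}}$ and invoke Ohno's relation, Theorem~\ref{ohno}, together with the $t\mapsto 1-t$ symmetry of those integrals; the case $\mathcal{F}=\mathcal{A}$ is then the parallel congruence argument.)

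The delicate point is designing the connector and verifying the transport relation. One needs a single binomial-type connector that is simultaneously compatible with thickening an $a$-run on the $\boldsymbol{p}$-side and thickening a $b$-run on the $\boldsymbol{q}$-side, so that after the $a\leftrightarrow b$ flip the two thickening operations become intertwined; and one must check that the resulting finite identity really is a consequence of the harmonic product formula (together with the reversal relation), with no covert use of a property of classical $\zeta$ that is unavailable in $\mathcal{A}$. Getting the iteration to close cleanly — so that the two boundary configurations give exactly the two Ohno type sums, not merely something congruent to them modulo lower-order terms — is where the bookkeeping must be handled with care; as in other connected-sum arguments, I expect most of the apparent difficulty to evaporate once the connector has been chosen correctly, so that finding the right connector is the real task.
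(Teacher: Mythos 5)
This theorem is not proved in the paper at all: it is quoted from Oyama's work \cite{Oya15} and used as an input (for instance in the proof of Proposition \ref{main_dep2}), so there is no proof of the authors' own to compare yours against. Judged on its own terms, your proposal has a genuine gap. The opening reduction is correct: encoding an index of weight $n$ by a word in $\{a,b\}^{n-1}$ so that Hoffman's dual becomes the letter exchange $a\leftrightarrow b$, and observing that the two Ohno type sums are the sums of $\zeta_{\mathcal{F}}$ over the $a$-thickenings and the $b$-thickenings of $u(\boldsymbol{k})$, is a faithful and standard reformulation. But everything that would constitute the actual proof is deferred: the connected sum $Z_{\mathcal{F}}(\boldsymbol{p};\boldsymbol{q})$ is never defined, the binomial connector is never written down, and the transport relation is neither stated precisely nor verified. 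You say yourself that ``finding the right connector is the real task''; since that task is not carried out, what you have is a plan modelled on the Seki--Yamamoto connected-sum technique, not a proof.

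A second, more substantive worry concerns the claim that both cases $\mathcal{F}=\mathcal{A}$ and $\mathcal{F}=\mathcal{S}$ follow uniformly from just the harmonic product formula and the reversal relation. The connected-sum proofs of Ohno-type identities for finite multiple harmonic sums operate on the truncated series themselves, via telescoping and partial-fraction manipulations of $\sum_{1\le m_1<\cdots<m_r<p}$; they are not formal consequences of the stuffle relation among the values. For $\mathcal{F}=\mathcal{S}$ no such series is available, since $\zeta_{\mathcal{S}}$ is assembled from harmonically regularized multiple zeta values, and the known arguments in that case (essentially your parenthetical alternative) route through duality for $\zeta_{\mathcal{S}}^{\star}$ and Ohno's relation for genuine multiple zeta values, which in turn needs the regularization machinery of \cite{IKZ06} to reconcile the harmonic and shuffle structures. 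So even granting a correct connector for $\mathcal{A}$, the advertised uniform treatment would still require a separate justification for $\mathcal{S}$.
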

For a positive integer $k$, let 
\[
 \mathfrak{Z}_{\mathcal{A}}(k)=( B_{p-k}/k \bmod{p} )_p\in\mathcal{A}, \qquad 
 \mathfrak{Z}_{\mathcal{S}}(k)=\zeta(k) \bmod{\zeta(2)}\in\mathcal{Z}/\zeta(2)\mathcal{Z},
\]
where $B_n$ is the $n$-th Bernoulli number. 
In light of this theorem, Kaneko looked at the generating function 
\[
 \mathit{O}_\mathcal{F} (\boldsymbol{k})
 =\sum_{\boldsymbol{e}} 
 \zeta_\mathcal{F}^\star (\boldsymbol{k} \oplus \boldsymbol{e}) X^{\left|\boldsymbol{k} \oplus \boldsymbol{e}\right|}
 +\sum_{\boldsymbol{e}}
 (-1)^{\left|\boldsymbol{e}\right|} \zeta_\mathcal{F}^\star (\boldsymbol{k}^\vee \oplus \boldsymbol{e}) X^{\left|\boldsymbol{k}^\vee \oplus \boldsymbol{e}\right|} 
\]
and gave the following conjecture:
\[
 \mathit{O}_\mathcal{F} (2,1,2)
 =( 3\mathfrak{Z}_\mathcal{F}(3)X^3 + 5\mathfrak{Z}_\mathcal{F}(5)X^5  + 7\mathfrak{Z}_\mathcal{F}(7)X^7 +\cdots)^{2}.
\]
In this paper, we prove a theorem that generalizes this conjecture. 
For positive integers $k$ and $i$, let
\begin{align*}
 F_{k,i}(X)
 =\sum_{n\ge k+i} \left( (-1)^k \binom{n-1}{k-1} -(-1)^i \binom{n-1}{i-1} \right)
  \mathfrak{Z}_{\mathcal{F}}(n)X^n.
\end{align*}
%%%%%
\begin{thm}[Main theorem] \label{main}
 For positive integers $k_1,k_2,k_3$, we have
 \begin{align*}
  \mathit{O}_\mathcal{F} (k_1,k_2,k_3) 
  =
  \begin{cases}
   \displaystyle{ 
    -\sum_{\substack{i+j=k_2+1 \\ i,j\ge2}} 
    F_{k_1,i}(X) F_{k_3,j}(X) \qquad (k_2\ge2), 
   } \\ 
    F_{k_1,1}(X) F_{k_3,1}(X) \qquad\qquad\qquad\, (k_2=1).
  \end{cases}
 \end{align*}
\end{thm}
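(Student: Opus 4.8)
The plan is to reduce everything to the Hoffman-dual side, where the star values linearize nicely, and then to evaluate the resulting Ohno type sums using Oyama's theorem (Theorem \ref{ohnoF}). First I would recall the standard expression of $\zeta^\star_\mathcal{F}$ in terms of $\zeta_\mathcal{F}$: writing $\zeta^\star_\mathcal{F}(\boldsymbol{k})=\sum_{\boldsymbol{k}'}\zeta_\mathcal{F}(\boldsymbol{k}')$ where $\boldsymbol{k}'$ runs over indices obtained from $\boldsymbol{k}$ by replacing some commas with pluses. Applying this to $\zeta^\star_\mathcal{F}(\boldsymbol{k}\oplus\boldsymbol{e})$ and collecting the Ohno sums over $\boldsymbol{e}$, one sees that each generating function $\mathit{O}_\mathcal{F}(\boldsymbol{k})$ is a $\mathbb{Z}$-linear combination of the basic generating functions $G_\mathcal{F}(\boldsymbol{l}):=\sum_{\boldsymbol{e}}\zeta_\mathcal{F}(\boldsymbol{l}\oplus\boldsymbol{e})X^{|\boldsymbol{l}\oplus\boldsymbol{e}|}$ over various indices $\boldsymbol{l}$. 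Crucially, by Oyama's theorem $G_\mathcal{F}(\boldsymbol{l})$ depends only on the Hoffman-dual orbit, and the two contributions in the definition of $\mathit{O}_\mathcal{F}(\boldsymbol{k})$ (the $\boldsymbol{k}$ part and the $(-1)^{|\boldsymbol{e}|}\boldsymbol{k}^\vee$ part) are designed so that their dual-side expansions share a common structure; the sign $(-1)^{|\boldsymbol{e}|}$ is exactly what is needed to align the two after dualization.

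The key computational step will be to work out, for depth-three indices $\boldsymbol{k}=(k_1,k_2,k_3)$, the explicit form of the dual-side expansion. Here I would use the fact that for an index of the form $(1^{a},2,1^{b})$ or more generally a "staircase" index, the Ohno sum $G_\mathcal{F}$ can be evaluated in closed form: in the finite/symmetric setting, Oyama-type reductions collapse such sums down to products of single values $\mathfrak{Z}_\mathcal{F}(n)$. Concretely, I expect that $\sum_{\boldsymbol{e}}\zeta_\mathcal{F}(a\oplus e)X^{a+e}$ for a depth-one index $a$ equals a single series in the $\mathfrak{Z}_\mathcal{F}(n)$, and that the depth-two Ohno sums factor as (Cauchy) products of two such series; the binomial coefficients $\binom{n-1}{k-1}$ appearing in $F_{k,i}(X)$ come precisely from expanding $\sum_{e\ge0}\binom{k-1+e}{e}X^{k+e}$-type generating functions and re-indexing by total weight $n=k+e$. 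The alternating sign $(-1)^{|\boldsymbol{e}|}$ on the $\boldsymbol{k}^\vee$ side produces the $(-1)^k$ versus $(-1)^i$ discrepancy inside $F_{k,i}$.

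After assembling these pieces, the proof splits into the two cases of the theorem. When $k_2=1$, Hoffman duality sends $(k_1,1,k_3)$ to an index whose Ohno sum factors directly into $F_{k_1,1}(X)F_{k_3,1}(X)$; the single term reflects that there is only one way to "split in the middle." When $k_2\ge2$, the middle block of $k_2$ ones in $\boldsymbol{k}^\vee$ can be grouped in $k_2-1$ ways corresponding to the splittings $i+j=k_2+1$ with $i,j\ge2$, which is the origin of the sum $\sum_{i+j=k_2+1,\,i,j\ge2}F_{k_1,i}(X)F_{k_3,j}(X)$, and the overall minus sign comes from the inclusion–exclusion/telescoping when passing between the star expansion and the product form. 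I expect the main obstacle to be the bookkeeping in this middle step: carefully tracking how the star-to-non-star expansion interacts with Hoffman duality and with the $\oplus\boldsymbol{e}$ shift simultaneously, and verifying that all the cross terms cancel to leave exactly the stated product. Once the depth-two building-block identity (the factorization of $G_\mathcal{F}$ for $2$-staircases into products of $F$'s) is established — which is itself a short induction using Oyama's theorem together with the Ohno relation for $\mathcal{S}$ in weight-graded form — the depth-three statement follows by a finite case analysis on the shape of $\boldsymbol{k}^\vee$.
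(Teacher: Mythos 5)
Your overall direction --- passing to the Hoffman-dual side and expanding star values in terms of non-star values --- matches the spirit of the paper's argument, but there are two genuine gaps. First, you misstate what Oyama's theorem provides: Theorem \ref{ohnoF} says $\sum_{|\boldsymbol{e}|=m}\zeta_\mathcal{F}(\boldsymbol{k}\oplus\boldsymbol{e})=\sum_{|\boldsymbol{e}|=m}\zeta_\mathcal{F}((\boldsymbol{k}^\vee\oplus\boldsymbol{e})^\vee)$, with an \emph{outer} dual applied after the shift $\oplus\boldsymbol{e}$; it does not say that the Ohno sum $G_\mathcal{F}(\boldsymbol{l})$ depends only on the Hoffman-dual orbit of $\boldsymbol{l}$, so the claimed alignment of the two halves of $\mathit{O}_\mathcal{F}$ cannot be extracted from ``invariance'' alone. (The paper uses Oyama's theorem only for the depth-two vanishing; in depth three the tool that handles the $\boldsymbol{k}^\vee$-side is Theorem \ref{ohnoFS}, which rewrites $\sum_{|\boldsymbol{e}|=m}\tilde\zeta^\star_\mathcal{F}(\boldsymbol{l}\oplus\boldsymbol{e})$ as $\tilde\zeta^\star_\mathcal{F}(\boldsymbol{l}\sha(\{1\}^m))$.) Also, your suggestion that $\sum_{e}\zeta_\mathcal{F}(a+e)X^{a+e}$ is a nontrivial series in the $\mathfrak{Z}_\mathcal{F}(n)$ is off the mark, since $\zeta_\mathcal{F}(k)=0$ for every $k$; the $F_{k,i}$'s actually arise from the sum formula (Theorem \ref{sumF}) applied to sums over all indices with the entry $\ge 2$ in a fixed position.

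Second, and more seriously, the assertion that the relevant Ohno sums ``factor as products'' and that ``all the cross terms cancel'' is precisely the hard part, and you supply no mechanism for it. In the paper this is done by the antipode relation (Proposition \ref{antipode}), which splits each depth-five star value $\tilde\zeta^\star_\mathcal{F}(\{1\}^a,2,\{1\}^b,2,\{1\}^c)$ into a sum of products of two values of $(\{1\}^\ast,2,\{1\}^\ast)$-type (the source of the $\mathfrak{Z}_\mathcal{F}(n)$'s and hence of the $F_{k,i}$'s) \emph{plus} a leftover full-depth non-star term. These leftovers --- $\sum_m\tilde\zeta_\mathcal{F}(p(P_m))$ from the $\boldsymbol{k}$-side and $\sum_m\tilde\zeta_\mathcal{F}(p(Q_m))$ from the $\boldsymbol{k}^\vee$-side --- do not individually reduce to products of single values; they cancel only against each other, and proving $\tilde\zeta_\mathcal{F}(p(P_m-Q_m))=0$ (Lemma \ref{PQ2}) requires the shuffle relation for $\zeta_\mathcal{F}$ (Theorem \ref{shF}, via $\zeta_\mathcal{F}(p(wy\sh w'y))=0$), an ingredient entirely absent from your outline. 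Without it, the ``bookkeeping in the middle step'' you flag as the main obstacle is not bookkeeping but a missing idea, and the final telescoping of binomial coefficients that turns the surviving terms into $-\sum_{i+j=k_2+1}F_{k_1,i}F_{k_3,j}$ (or $F_{k_1,1}F_{k_3,1}$ when $k_2=1$) also needs the parity fact $\mathfrak{Z}_\mathcal{F}(2n)=0$, which your sketch never invokes.
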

\begin{rem}
 Theorem \ref{main} implies that $\mathit{O}_\mathcal{F} (k_1,2,k_3)=0$. 
 We also note that the theorem implies Kaneko's conjecture when $k_1=2,k_2=1,k_3=2$, 
 since $F_{2,1}(X)=\sum_{n\ge3} n\mathfrak{Z}_\mathcal{F}(n)X^n$ and $\mathfrak{Z}_\mathcal{F}(2n)=0$ for all positive integers $n$. 
\end{rem}
\begin{rem}
 By the duality formula (Theorem \ref{dualFS}), the second sum in $\mathit{O}_\mathcal{F} (\boldsymbol{k})$ is equal to 
 $-\sum_{\boldsymbol{e}}
  (-1)^{\left|\boldsymbol{e}\right|} 
  \zeta_\mathcal{F}^\star ((\boldsymbol{k}^\vee \oplus \boldsymbol{e})^\vee) 
  X^{\left|\boldsymbol{k}^\vee \oplus \boldsymbol{e}\right|}$, 
  which more closely resembles the right-hand side of Theorem \ref{ohnoF}.   
\end{rem}
\begin{rem}
 We note that $\mathit{O}_\mathcal{F} (k)=0$ and $\mathit{O}_\mathcal{F} (k_1,k_2)=0$ hold for all positive integers $k,k_1,k_2$ (see Section 2).
\end{rem}
\begin{rem}
 Hirose-Imatomi-Murahara-Saito \cite{HIMS18} shows that 
 $\sum_{\left|\boldsymbol{e}\right|=m} 
 \zeta_\mathcal{F}^\star (\boldsymbol{k}^\vee \oplus \boldsymbol{e})$ 
 can be represented as a $\mathbb{Z}$-linear combination of $\zeta_\mathcal{F}^\star (\boldsymbol{k} \oplus \boldsymbol{e})$'s.
\end{rem}

We will give proofs of $\mathit{O}_\mathcal{F} (k)=0$ and $\mathit{O}_\mathcal{F} (k_1,k_2)=0$ in Section 2 
and of our main theorem (Theorem \ref{main}) in Section 3.

%%%%%%%%%%%%%%%%%%%%%%%%%%%%%%%%%%%%%%%%%%%%%%%%%%%%%%%%%%%%%%%%%%%%%%%%%%%%%%%%%%%%%%%%%%%%%%%%%%%%
\section{Proofs of $\mathit{O}_\mathcal{F} (k)=0$ and $\mathit{O}_\mathcal{F} (k_1,k_2)=0$}
For an index $\boldsymbol{k}$ and a positive integer $k$, we let 
\[
 \tilde\zeta_\mathcal{F} (\boldsymbol{k})
  =\zeta_\mathcal{F} (\boldsymbol{k}) X^{\left|\boldsymbol{k}\right|}, \quad 
 \tilde\zeta_\mathcal{F}^\star (\boldsymbol{k})
  =\zeta_\mathcal{F}^\star (\boldsymbol{k}) X^{\left|\boldsymbol{k}\right|}, \text{ and} \quad 
 \tilde{\mathfrak{Z}}_{\mathcal{F}} (k) 
  =\mathfrak{Z}_\mathcal{F}(k) X^{k}.
\]
%%%%%%%%
\subsection{Proof of $\mathit{O}_\mathcal{F} (k)=0$}
\begin{prop} \label{dep1}
 For a positive integer $k$, we have
 \[
  \tilde\zeta_\mathcal{F} (k)
  =\tilde\zeta_\mathcal{F}^\star (k)
  =0. 
 \]
\end{prop}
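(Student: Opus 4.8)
The plan is to reduce the statement to two classical facts, one for each choice of $\mathcal{F}$, after first disposing of the star versus non-star distinction, which is vacuous in depth one.

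First I would observe that in depth one there is nothing to distinguish the star and non-star values: the ranges $1\le m_1<p$ and $1\le m_1\le p-1$ coincide, and likewise $\zeta(k)=\zeta^\star(k)$ (as convergent series when $k\ge2$, and as regularized values when $k=1$, since the underlying series are literally identical). Hence $\zeta_\mathcal{A}(k)=\zeta_\mathcal{A}^\star(k)$ and $\zeta_\mathcal{S}(k)=\zeta_\mathcal{S}^\star(k)$ straight from the definitions, and it suffices to prove $\zeta_\mathcal{F}(k)=0$; the asserted identity then follows upon multiplying by $X^k$.

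For $\mathcal{F}=\mathcal{A}$, fix a prime $p>k+1$. Fermat's little theorem gives $m^{-k}\equiv m^{\,p-1-k}\pmod p$ for $1\le m\le p-1$, and since $0<p-1-k<p-1$ the classical power-sum congruence $\sum_{m=1}^{p-1}m^{j}\equiv0\pmod p$ (valid whenever $p-1\nmid j$) yields $\sum_{m=1}^{p-1}m^{-k}\equiv0\pmod p$. Thus all but finitely many components of $\zeta_\mathcal{A}(k)$ vanish, so $\zeta_\mathcal{A}(k)=0$ in $\mathcal{A}$. For $\mathcal{F}=\mathcal{S}$, the $i=0$ and $i=1$ terms in the definition give $\zeta_\mathcal{S}(k)\equiv\bigl(1+(-1)^k\bigr)\zeta(k)\pmod{\zeta(2)\mathcal{Z}}$, where $\zeta(\emptyset)=1$ and, when $k=1$, $\zeta(1)$ denotes the harmonic-regularized value. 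If $k$ is odd this is $0$; if $k$ is even, Euler's evaluation $\zeta(k)\in\mathbb{Q}\,\zeta(2)^{k/2}$ together with $\zeta(2)^{k/2-1}\in\mathcal{Z}$ shows $\zeta(k)\in\zeta(2)\mathcal{Z}$, so again $\zeta_\mathcal{S}(k)=0$.

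I do not expect a genuine obstacle: the only inputs beyond the definitions are the elementary congruence $\sum_{m=1}^{p-1}m^{j}\equiv0\pmod p$ for $p-1\nmid j$ and Euler's formula for $\zeta(2n)$, both entirely standard. The single point deserving a word of care is the case $k=1$ in the symmetric setting, where $\zeta(1)$ must be read as a regularized value; but it appears with coefficients $+1$ and $-1$ and cancels formally, so the conclusion is unaffected by the choice of regularization.
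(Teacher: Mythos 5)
Your proof is correct; the paper itself gives no argument here but simply cites Kaneko's survey, and your verification (depth-one star/non-star coincidence, the power-sum congruence $\sum_{m=1}^{p-1}m^{p-1-k}\equiv 0 \pmod p$ for $p>k+1$, and the parity/Euler argument modulo $\zeta(2)\mathcal{Z}$, with the $k=1$ regularized case cancelling formally) is exactly the standard proof that reference supplies. No gaps.
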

\begin{proof}
 See Kaneko \cite{Kan19}. 
\end{proof}
\begin{prop}[Hoffman \cite{Hof15}, Murahara \cite{Mur15}] \label{symsumF}
 For a nonempty index $(k_1,\dots,k_r)$, we have
 \begin{align*}
  \sum_{\sigma \in S_r} \tilde\zeta_\mathcal{F} (k_{\sigma(1)},\dots, k_{\sigma(r)})
  =\sum_{\sigma \in S_r} \tilde\zeta_\mathcal{F}^\star (k_{\sigma(1)},\dots, k_{\sigma(r)})
  =0,
 \end{align*}
  where $S_r$ is the symmetric group of degree $r$.
\end{prop}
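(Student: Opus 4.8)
The plan is to deduce the identity from the vanishing of depth-one values (Proposition \ref{dep1}) together with the harmonic product, by induction on the depth $r$. Since permuting the components of an index leaves its weight unchanged, $\sum_{\sigma\in S_r}\tilde\zeta_\mathcal{F}(k_{\sigma(1)},\dots,k_{\sigma(r)})=X^{\left|\boldsymbol{k}\right|}\sum_{\sigma\in S_r}\zeta_\mathcal{F}(k_{\sigma(1)},\dots,k_{\sigma(r)})$ and likewise for the star version, so it suffices to prove that $\sum_{\sigma\in S_r}\zeta_\mathcal{F}(k_{\sigma(1)},\dots,k_{\sigma(r)})=0$ in $\mathcal{F}$ and the analogous statement with stars.

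The base case $r=1$ is precisely Proposition \ref{dep1}. For the inductive step I would invoke the well-known facts that $\zeta_\mathcal{F}$ is a homomorphism for the harmonic product and $\zeta_\mathcal{F}^\star$ for its star analogue. Expanding a product of $r$ depth-one values in the (quasi-shuffle) harmonic algebra gives
\[
 \zeta_\mathcal{F}(k_1)\cdots\zeta_\mathcal{F}(k_r)
  =\sum_{s=1}^{r}\ \sum_{f\colon[r]\twoheadrightarrow[s]}\zeta_\mathcal{F}(\boldsymbol{k}_f),
\]
where $[r]=\{1,\dots,r\}$, the inner sum runs over all surjections $f$, and $\boldsymbol{k}_f=\bigl(\sum_{i\in f^{-1}(1)}k_i,\dots,\sum_{i\in f^{-1}(s)}k_i\bigr)$; the star version is identical with an extra sign $(-1)^{r-s}$ in front of the inner sum. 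The left-hand side is $0$ by Proposition \ref{dep1}.

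Now I would sort the right-hand side by the number of blocks $s$. The $s=r$ terms come from the $r!$ bijections $[r]\to[r]$ and add up to exactly $\sum_{\sigma\in S_r}\zeta_\mathcal{F}(k_{\sigma(1)},\dots,k_{\sigma(r)})$. For a fixed $s<r$, the group $S_s$ acts freely on the surjections $[r]\twoheadrightarrow[s]$ by post-composition, and replacing $f$ by $\rho\circ f$ permutes the components of $\boldsymbol{k}_f$ by $\rho$; hence the $s$-block contribution equals $\sum_{[f]}\sum_{\rho\in S_s}\zeta_\mathcal{F}\bigl((\boldsymbol{k}_f)_{\rho(1)},\dots,(\boldsymbol{k}_f)_{\rho(s)}\bigr)$, summed over orbit representatives $[f]$, and each inner sum vanishes by the inductive hypothesis in depth $s<r$. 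Thus the $s<r$ part is $0$, and the displayed identity collapses to $\sum_{\sigma\in S_r}\zeta_\mathcal{F}(k_{\sigma(1)},\dots,k_{\sigma(r)})=0$. The star case runs verbatim, the constant sign $(-1)^{r-s}$ being harmless.

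The only place that needs genuine care is the harmonic-product expansion of a product of depth-one values: that it is indexed by surjections (equivalently, ordered set partitions of $[r]$) with all coefficients equal to $\pm1$, so that collecting the $S_s$-orbits really does reproduce lower-depth permutation sums to which the induction applies. This is a routine computation in Hoffman's quasi-shuffle algebra; alternatively, one may simply cite the symmetric sum formula from \cite{Hof15} and \cite{Mur15}.
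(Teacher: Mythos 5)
Your argument is correct. Note first that the paper itself gives no proof of this proposition: it is quoted from Hoffman \cite{Hof15} and Murahara \cite{Mur15}, so there is nothing internal to compare against. Your induction is a clean, self-contained reconstruction of the standard argument behind the symmetric sum theorem in those references: the quasi-shuffle expansion of $\zeta_\mathcal{F}(k_1)\cdots\zeta_\mathcal{F}(k_r)$ over ordered set partitions (all coefficients $1$, resp.\ $(-1)^{r-s}$ in the star case) is right, the $S_s$-action by post-composition is indeed free on surjections so the $s<r$ blocks really do group into full lower-depth permutation sums, and the constant sign $(-1)^{r-s}$ within each orbit is harmless. The one ingredient you should flag as nontrivial input rather than "well known" in passing is that $\zeta_\mathcal{S}$ and $\zeta_\mathcal{S}^\star$ are homomorphisms for the harmonic and star-harmonic products modulo $\zeta(2)$; this holds precisely because the paper defines them via harmonic-regularized values (it is the same input underlying Proposition \ref{antipode}), but it is a theorem of Kaneko--Zagier/Jarossay, not a formality. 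With that citation in place, your proof is complete and is essentially the proof one finds in the quoted sources.
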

For a nonnegative integer $m$, we let $\{1\}^m$ denote the all-one sequence of length $m$. 
\begin{prop} \label{main_dep1}
 For a positive integer $k$, we have
 \[
  \mathit{O}_\mathcal{F} (k)=0. 
 \]
\end{prop}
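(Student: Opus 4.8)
The plan is to show that the two sums making up $\mathit{O}_\mathcal{F}(k)$ vanish separately. Since the depth-one index $(k)$ has Hoffman's dual $(k)^{\vee}=\{1\}^{k}$, we can rewrite
\[
 \mathit{O}_\mathcal{F}(k)
 =\sum_{e\ge0}\tilde\zeta_\mathcal{F}^\star(k+e)
 +\sum_{\boldsymbol{e}}(-1)^{\left|\boldsymbol{e}\right|}\tilde\zeta_\mathcal{F}^\star(\{1\}^{k}\oplus\boldsymbol{e}),
\]
where in the second sum $\boldsymbol{e}=(e_1,\dots,e_k)$ runs over all $k$-tuples of nonnegative integers. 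The first sum vanishes term by term by Proposition \ref{dep1}, so the whole problem reduces to the second sum.

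For the second sum I would substitute $\boldsymbol{a}=\{1\}^{k}\oplus\boldsymbol{e}=(1+e_1,\dots,1+e_k)$; then $\boldsymbol{a}$ runs over all indices of depth $k$ and $\left|\boldsymbol{e}\right|=\left|\boldsymbol{a}\right|-k$, so the second sum equals $(-1)^{k}\sum_{\dep\boldsymbol{a}=k}(-1)^{\left|\boldsymbol{a}\right|}\tilde\zeta_\mathcal{F}^\star(\boldsymbol{a})$. This is a legitimate element of the relevant power series ring because only finitely many indices contribute to each power of $X$. I would then organise the indices of depth $k$ by the multiset of their entries: over the rearrangements of a fixed multiset the weight, hence the sign $(-1)^{\left|\boldsymbol{a}\right|}$, is constant, and the sum of $\tilde\zeta_\mathcal{F}^\star(\boldsymbol{a})$ over these rearrangements differs from the fully symmetrised sum $\sum_{\sigma\in S_k}\tilde\zeta_\mathcal{F}^\star(a_{\sigma(1)},\dots,a_{\sigma(k)})$ only by a nonzero rational factor, namely the product of the factorials of the multiplicities. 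By Proposition \ref{symsumF} the symmetrised sum is $0$, so the sum over each multiset is $0$, and summing over all multisets shows the second sum vanishes. Hence $\mathit{O}_\mathcal{F}(k)=0$.

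I do not anticipate a serious obstacle; the only point that needs a moment's care is exactly this last bookkeeping step, namely that Proposition \ref{symsumF} is a sum over the full symmetric group $S_k$ (counting repeated entries with multiplicity), whereas the generating function is a sum over genuinely distinct indices. Since everything takes place in a $\mathbb{Q}$-vector space, dividing out the harmless multiplicity factor causes no trouble. One could alternatively avoid multisets altogether by fixing a weight $w$ first and applying Proposition \ref{symsumF} to a single representative ordering of each way of writing $w$ as a sum of $k$ positive integers, which amounts to the same computation.
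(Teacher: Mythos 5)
Your proof is correct and follows essentially the same route as the paper, which likewise kills the first sum by Proposition \ref{dep1} and the second by Proposition \ref{symsumF}; you have merely spelled out the multiset/multiplicity bookkeeping that the paper leaves implicit. The sign computation $(-1)^{\left|\boldsymbol{e}\right|}=(-1)^{k}(-1)^{\left|\boldsymbol{a}\right|}$ and the division by the product of factorials of multiplicities are both fine.
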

\begin{proof}
 By Propositions \ref{dep1} and \ref{symsumF}, we have 
 \begin{align*}
  \mathit{O}_\mathcal{F} (k) 
  &=\sum_{m\ge0} 
     \tilde\zeta_\mathcal{F}^\star (k+m) 
  +\sum_{\boldsymbol{e}}
    (-1)^{\left|\boldsymbol{e}\right|} \tilde\zeta_\mathcal{F}^\star ((\{1\}^k) \oplus \boldsymbol{e}) \\
  &=0+0=0. \qedhere
 \end{align*}
\end{proof}

%%%%%%%%%%%%%%%%%%%%%%%%
\subsection{Proof of $\mathit{O}_\mathcal{F} (k_1,k_2)=0$}
\begin{prop} \label{dep2}
 For positive integers $k_1,k_2$, we have
 \[
  \tilde\zeta_\mathcal{F} (k_1,k_2)
  =\tilde\zeta_\mathcal{F}^\star (k_1,k_2)
  =(-1)^{k_1+1} \binom{k_1+k_2}{k_1} \tilde{\mathfrak{Z}}_{\mathcal{F}} (k_1+k_2). 
 \]
\end{prop}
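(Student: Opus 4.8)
The plan is to reduce the starred value to the unstarred one using Proposition~\ref{dep1}, and then prove the closed formula $\zeta_\mathcal{F}(k_1,k_2)=(-1)^{k_1+1}\binom{k_1+k_2}{k_1}\mathfrak{Z}_\mathcal{F}(k_1+k_2)$ separately in the two cases $\mathcal{F}=\mathcal{A}$ and $\mathcal{F}=\mathcal{S}$. First I would establish $\tilde\zeta_\mathcal{F}^\star(k_1,k_2)=\tilde\zeta_\mathcal{F}(k_1,k_2)$. For $\mathcal{F}=\mathcal{A}$ this is immediate from splitting the defining sum according to whether $m_1<m_2$ or $m_1=m_2$, which gives $\zeta_\mathcal{A}^\star(k_1,k_2)=\zeta_\mathcal{A}(k_1,k_2)+\zeta_\mathcal{A}(k_1+k_2)$. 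For $\mathcal{F}=\mathcal{S}$ one expands both definitions and uses $\zeta^\star(k)=\zeta(k)$ in depth one together with the regularized relation $\zeta^\star(a,b)=\zeta(a,b)+\zeta(a+b)$, obtaining $\zeta_\mathcal{S}^\star(k_1,k_2)=\zeta_\mathcal{S}(k_1,k_2)+\zeta_\mathcal{S}(k_1+k_2)$. In both cases Proposition~\ref{dep1} kills the extra depth-one term. Since both sides of the asserted identity carry the factor $X^{k_1+k_2}$, it then remains only to prove the displayed closed formula.

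For $\mathcal{F}=\mathcal{A}$ I would carry out the classical computation modulo a prime $p>k_1+k_2$: working modulo $p$, replace $m^{-k}$ by $m^{p-1-k}$, expand the inner sum $\sum_{m=1}^{n-1}m^{p-1-k_1}$ by Faulhaber's formula, and then apply $\sum_{n=1}^{p-1}n^{j}\equiv-1$ or $0\pmod p$ according as $(p-1)\mid j$ or not. Exactly one term survives and produces $B_{p-k_1-k_2}$; using $\binom{p-k_1}{k_2}\equiv(-1)^{k_2}\binom{k_1+k_2-1}{k_2}\pmod p$ and $\frac{1}{k_1}\binom{k_1+k_2-1}{k_2}=\frac{1}{k_1+k_2}\binom{k_1+k_2}{k_1}$ this yields $\zeta_\mathcal{A}(k_1,k_2)=(-1)^{k_2}\binom{k_1+k_2}{k_1}\mathfrak{Z}_\mathcal{A}(k_1+k_2)$. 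This agrees with the claim because $(-1)^{k_2}=(-1)^{k_1+1}$ when $k_1+k_2$ is odd, while $\mathfrak{Z}_\mathcal{A}(k_1+k_2)=0$ when $k_1+k_2$ is even.

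For $\mathcal{F}=\mathcal{S}$ I would substitute the stuffle relation $\zeta(k_1)\zeta(k_2)=\zeta(k_1,k_2)+\zeta(k_2,k_1)+\zeta(k_1+k_2)$ (of regularized values) into the definition of $\zeta_\mathcal{S}(k_1,k_2)$, obtaining $\zeta_\mathcal{S}(k_1,k_2)=(1+(-1)^{k_2})\zeta(k_1,k_2)+((-1)^{k_2}+(-1)^{k_1+k_2})\zeta(k_2,k_1)+(-1)^{k_2}\zeta(k_1+k_2)$, and then invoke Euler's classical decomposition of double zeta values of odd weight, which modulo $\zeta(2)$ collapses to $\zeta(a,b)\equiv\frac12\bigl(\binom{a+b}{a}-1\bigr)\zeta(a+b)$ whenever $a$ is odd and $b$ is even (every even zeta value and every product containing a factor $\zeta(2)$ vanishing modulo $\zeta(2)$). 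Feeding this in and checking the parity cases for $(k_1,k_2)$ gives $\zeta_\mathcal{S}(k_1,k_2)\equiv(-1)^{k_1+1}\binom{k_1+k_2}{k_1}\zeta(k_1+k_2)\pmod{\zeta(2)}$, which is the claim since $\mathfrak{Z}_\mathcal{S}(k_1+k_2)=\zeta(k_1+k_2)\bmod\zeta(2)$, this being $0$ in the even-weight case so that that case is trivial.

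The main obstacle is the $\mathcal{S}$-case: handling the harmonic regularization of the non-admissible double zeta values appearing after the stuffle substitution, and matching Euler's decomposition with the correct signs and binomial coefficients across the parity cases. An alternative, more uniform route would use the shuffle and stuffle relations satisfied by $\zeta_\mathcal{F}$ together with $\zeta_\mathcal{F}(k)=0$ (Proposition~\ref{dep1}) to pin down all the depth-two values of weight $k_1+k_2$ up to a single overall scalar; but identifying that scalar with $\mathfrak{Z}_\mathcal{F}(k_1+k_2)$ still requires an honest computation in each of the two settings, so little is really saved.
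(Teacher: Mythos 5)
Your proposal is correct, but it is not comparable to the paper's own argument in any substantive way: the paper simply refers to Kaneko's survey \cite{Kan19} for Proposition \ref{dep2}, whereas you reconstruct the standard proofs from scratch. Your reduction of the star value to the non-star value is fine, since the extra term is a depth-one value killed by Proposition \ref{dep1} (for $\mathcal{S}$ the difference is exactly $\zeta_\mathcal{S}(k_1+k_2)$, which vanishes modulo $\zeta(2)$). The $\mathcal{A}$-case computation is the classical congruence $\zeta_\mathcal{A}(k_1,k_2)\equiv\frac{(-1)^{k_2}}{k_1+k_2}\binom{k_1+k_2}{k_1}B_{p-k_1-k_2}$, and your sign bookkeeping is right: $(-1)^{k_2}=(-1)^{k_1+1}$ in odd weight, while in even weight $\mathfrak{Z}_\mathcal{A}$ vanishes because the relevant Bernoulli numbers have odd index. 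In the $\mathcal{S}$-case your stuffle substitution and the quoted mod-$\zeta(2)$ form of Euler's decomposition, $\zeta(a,b)\equiv\frac12\bigl(\binom{a+b}{a}-1\bigr)\zeta(a+b)$ for $a$ odd, $b$ even, are correct (I checked them against weight $5$ and $7$ double-shuffle data), and they do suffice for both parity patterns of odd weight, since in the (even, odd) case only $\zeta(k_2,k_1)$ survives; the even-weight case also follows from your stuffle identity, as the surviving terms are $2\zeta(k_1)\zeta(k_2)-\zeta(k_1+k_2)$ or $-\zeta(k_1+k_2)$, both in $\zeta(2)\mathcal{Z}$. The regularization issue you flag as the main obstacle is in fact harmless: the possibly divergent terms $\zeta(k_1,1)$ and $\zeta(k_2,1)$ enter with coefficients $1+(-1)^{k_2}$ and $(-1)^{k_2}(1+(-1)^{k_1})$, which vanish exactly when $k_2=1$ or $k_1=1$, so Euler's decomposition is only ever applied to admissible values; it would strengthen the write-up to say this explicitly. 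In short, your route is a self-contained and correct replacement for the paper's citation, at the cost of redoing two classical computations that the paper deliberately outsources.
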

\begin{proof}
 See Kaneko \cite{Kan19}. 
\end{proof}
The next formula is well known (see \cite{SS17}, for example).
\begin{prop} \label{antipode}
 For a nonempty index $(k_1,\dots,k_r)$, we have
 \[
  \sum_{i=0}^{r}(-1)^{i} 
   \tilde\zeta_{\mathcal{\mathcal{F}}}^{\star} (k_1,\dots,k_{i})
   \tilde\zeta_\mathcal{F} (k_{r,},\dots,k_{i+1})=0.
 \]
 Here, we understand $\tilde\zeta_\mathcal{F}(\emptyset)=\tilde\zeta_{\mathcal{F}}^{\star}(\emptyset)=1$.
\end{prop}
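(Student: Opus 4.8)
The plan is to recognize this identity as the statement that the antipode of the Hopf algebra structure underlying the harmonic (stuffle) product, evaluated on a word, pairs trivially with the counit once we pass to the quotient ring $\mathcal{F}$. More concretely, both $\tilde\zeta_\mathcal{F}$ and $\tilde\zeta_\mathcal{F}^\star$ factor through the harmonic algebra $\mathfrak{H}^1 = \mathbb{Q}\langle z_1, z_2, \dots\rangle$ (or the $X$-weighted variant), and the map sending an index $(k_1,\dots,k_r)$ to its reversal $(k_r,\dots,k_1)$ together with the alternating sign is precisely the antipode $S$ of the Hopf algebra $(\mathfrak{H}^1, \text{stuffle}, \Delta_{\text{dec}})$ where $\Delta_{\text{dec}}$ is deconcatenation. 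The left-hand side is then $(\tilde\zeta_\mathcal{F}^\star \cdot \tilde\zeta_\mathcal{F}) \circ (\mathrm{id} \otimes S) \circ \Delta_{\text{dec}}$ applied to the word $z_{k_1}\cdots z_{k_r}$, and the Hopf-algebra axiom $m \circ (\mathrm{id}\otimes S)\circ\Delta = \eta\circ\varepsilon$ would finish it — provided one knows the required compatibility, namely that $\zeta_\mathcal{F}^\star$ and $\zeta_\mathcal{F}$ are ``mutually inverse'' with respect to stuffle in the appropriate sense.

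First I would recall the elementary combinatorial relation between the non-star and star values: for any index $\boldsymbol{k}=(k_1,\dots,k_r)$ one has $\zeta_\mathcal{F}^\star(\boldsymbol{k}) = \sum \zeta_\mathcal{F}(\boldsymbol{k}')$, where $\boldsymbol{k}'$ ranges over all indices obtained from $\boldsymbol{k}$ by replacing some of the commas ``$,$'' by plus signs ``$+$'' (this is immediate from the definitions, both in the $\mathcal{A}$-case and the $\mathcal{S}$-case, by splitting the range $m_1 \le \cdots \le m_r$ according to which consecutive inequalities are equalities). Equivalently, in the $X$-weighted setting, $\tilde\zeta_\mathcal{F}^\star$ and $\tilde\zeta_\mathcal{F}$ are related by the standard ``star/non-star'' unipotent change of variables. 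Because this relation is purely formal and identical for $\mathcal{F}=\mathcal{A}$ and $\mathcal{F}=\mathcal{S}$, I would state and prove the proposition uniformly.

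The cleanest route, avoiding any explicit Hopf-algebra language, is a direct induction on $r$. For $r=1$ the sum reads $\tilde\zeta_\mathcal{F}^\star(k_1) - \tilde\zeta_\mathcal{F}(k_1) = 0$, which holds because $\zeta_\mathcal{F}(k_1)=\zeta_\mathcal{F}^\star(k_1)$ trivially (depth one; also covered by Proposition \ref{dep1}, though we do not even need it to vanish). For the inductive step I would substitute the comma-to-plus expansion of each $\tilde\zeta_\mathcal{F}^\star(k_1,\dots,k_i)$ into the alternating sum, then reorganize the resulting double sum by grouping terms according to the ``cut point'' and the pattern of plus signs; a sign-reversing involution — pairing a configuration in which the last plus/comma immediately before the cut is a comma with the one in which it is a plus — cancels everything. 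Alternatively, one differentiates the known identity $\sum_{i=0}^r (-1)^i \tilde\zeta^\star_\mathcal{F}(k_1,\dots,k_i)\,\tilde\zeta^\star_\mathcal{F}(k_r,\dots,k_{i+1})$-type antipode relation for the star values; but the stated mixed form is in fact the most natural one and is exactly the ``antipode'' relation of \cite{SS17}, so the honest thing is to cite it. I would therefore present the short induction and remark that it also follows from Hopf-algebra generalities.

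The main obstacle is bookkeeping rather than conceptual: one must handle the empty-index conventions $\tilde\zeta_\mathcal{F}(\emptyset)=\tilde\zeta_\mathcal{F}^\star(\emptyset)=1$ at the two ends of the sum carefully (the $i=0$ and $i=r$ terms), and one must make sure the comma/plus expansion is applied consistently to the reversed index $(k_r,\dots,k_{i+1})$ as well — recalling that $(\boldsymbol{k}^\vee\oplus\boldsymbol{e})^\vee$-type operations do not interact with simple reversal, so no subtlety enters there. Once the involution is set up correctly the cancellation is immediate and identical over $\mathbb{F}_p$ for every prime $p$ (hence in $\mathcal{A}$) and over $\mathbb{R}$ modulo $\zeta(2)\mathcal{Z}$ (hence in the $\mathcal{S}$-case), giving the result for $\mathcal{F}\in\{\mathcal{A},\mathcal{S}\}$ at once.
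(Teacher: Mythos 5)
The paper gives no argument here at all: its ``proof'' is the single line ``See \cite{SS17}'', so you are supplying a proof where the authors merely cite one. Your identification of the statement is correct --- it is precisely the antipode identity of the quasi-shuffle (harmonic) Hopf algebra with deconcatenation coproduct, applied to the algebra homomorphism $\zeta_\mathcal{F}$, and your $r=1,2$ checks and overall strategy are sound; the passage from the unweighted to the $X$-weighted ($\tilde\zeta$) version is harmless since weights are additive throughout. Two points deserve more care than your sketch gives them. First, the whole argument rests on the fact that the \emph{ring} product $\zeta_\mathcal{F}(\boldsymbol{k})\zeta_\mathcal{F}(\boldsymbol{l})$ expands by the stuffle rule: for $\mathcal{F}=\mathcal{A}$ this is immediate from truncated sums, but for $\mathcal{F}=\mathcal{S}$ it is a genuine (though known) theorem about the harmonic-regularized values entering the definition of $\zeta_\mathcal{S}$, not something ``purely formal and identical'' in the two cases; you should cite it rather than wave at it. Second, your inductive step is only gestured at --- the ``sign-reversing involution on comma/plus patterns near the cut'' is not actually constructed, and setting it up directly on the fully expanded double sum is messier than you suggest. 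A cleaner way to finish is the standard antipode recursion: with $S$ defined on words by $S(1)=1$ and $S(z_{k_1}\cdots z_{k_r})=-\sum_{i=0}^{r-1}S(z_{k_1}\cdots z_{k_i})*z_{k_{i+1}}\cdots z_{k_r}$, one checks by an easy induction that $S(z_{k_1}\cdots z_{k_i})$ maps under $\zeta_\mathcal{F}$ to $(-1)^i\zeta_\mathcal{F}^\star$ of the appropriate (reversed/contracted) index, and the displayed identity is exactly the defining recursion pushed through $\zeta_\mathcal{F}$. With either of these repairs your proof is complete and is, in substance, the proof found in \cite{SS17}.
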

\begin{thm}[Duality formula; Hoffman \cite{Hof15}, Jarossay \cite{Jar14}] \label{dualFS}
 For a nonempty index $\boldsymbol{k}$, we have
 \[ \tilde\zeta_\mathcal{F}^\star (\boldsymbol{k})=-\tilde\zeta_\mathcal{F}^\star (\boldsymbol{k}^{\vee}). \]
\end{thm}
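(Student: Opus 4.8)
### Proof Proposal for the Duality Formula (Theorem \ref{dualFS})

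The plan is to reduce the star-duality to the known antipode-type relation of Proposition \ref{antipode} together with the symmetry already built into the finite and symmetric settings. Recall that $\tilde\zeta_{\mathcal F}^\star$ and $\tilde\zeta_{\mathcal F}$ are related by the usual ``sum over all ways of inserting commas or plus signs'' identity: for a nonempty index $\boldsymbol k$, one has $\tilde\zeta_{\mathcal F}^\star(\boldsymbol k)=\sum_{\boldsymbol k'}\tilde\zeta_{\mathcal F}(\boldsymbol k')$, where $\boldsymbol k'$ ranges over all indices obtained from $\boldsymbol k$ by replacing some (possibly no) commas by plus signs; equivalently, summing over the ``refinements'' of a composition. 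The key combinatorial input is that Hoffman duality $\boldsymbol k\mapsto\boldsymbol k^\vee$ turns the operation ``merge adjacent parts'' into ``split a part'', so that the set of indices appearing with a sign in $\tilde\zeta_{\mathcal F}^\star(\boldsymbol k)+\tilde\zeta_{\mathcal F}^\star(\boldsymbol k^\vee)$ can be matched up in pairs that cancel.

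First I would fix notation for compositions: write $\boldsymbol k=(k_1,\dots,k_r)$ and identify it with the subset $S\subseteq\{1,\dots,|\boldsymbol k|-1\}$ of ``breakpoints''; then Hoffman's dual corresponds to the complementary subset. Next I would expand both $\tilde\zeta_{\mathcal F}^\star(\boldsymbol k)$ and $\tilde\zeta_{\mathcal F}^\star(\boldsymbol k^\vee)$ via the star-to-nonstar formula above, obtaining a signed sum of $\tilde\zeta_{\mathcal F}$'s indexed by pairs (a composition together with a chosen coarsening). Then I would apply Proposition \ref{antipode} in the form that expresses $\tilde\zeta_{\mathcal F}(\boldsymbol l)$ as an alternating sum of products $\tilde\zeta_{\mathcal F}^\star(\text{prefix})\,\tilde\zeta_{\mathcal F}(\text{reversed suffix})$ — more precisely I would iterate/unfold Proposition \ref{antipode} to convert everything into a single normal form. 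The crux is then a sign-reversing involution on the index set of the resulting sum: each term of $\tilde\zeta_{\mathcal F}^\star(\boldsymbol k)$ is paired with exactly one term of $\tilde\zeta_{\mathcal F}^\star(\boldsymbol k^\vee)$ carrying the opposite sign, so the grand total vanishes.

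Alternatively — and this is probably the cleaner route to write up — I would argue by induction on the weight $|\boldsymbol k|$, peeling off the first (or last) part. Using the recursive description of $\tilde\zeta_{\mathcal F}^\star$ in terms of lower-weight star values (a Hopf-algebraic deconcatenation, valid in both $\mathcal A$ and $\mathcal S$ since Proposition \ref{antipode} encodes exactly the antipode of the quasi-shuffle Hopf algebra), and matching it against the corresponding recursion for $\boldsymbol k^\vee$ under Hoffman duality, the inductive hypothesis handles all proper sub-indices and one is left with a single boundary identity. That boundary identity is essentially Proposition \ref{dep1} (the depth-one vanishing) packaged correctly, so the base case and the leftover term are both immediate.

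The main obstacle I anticipate is bookkeeping the signs and the interaction between the reversal in Proposition \ref{antipode} and the combinatorics of Hoffman duality: Hoffman's $\vee$ is an involution but it does not commute naively with reversal of the index, so the sign-reversing involution in the first approach, or the matching of recursions in the second, has to be set up with some care. Once the correct involution (or the correct pairing of recursive branches) is identified, the cancellation is formal and works uniformly for $\mathcal F=\mathcal A$ and $\mathcal F=\mathcal S$, since every ingredient used — Propositions \ref{dep1}, \ref{symsumF}, \ref{antipode} — holds in both.
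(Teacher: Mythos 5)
This statement is not proved in the paper at all: it is quoted as a known theorem of Hoffman \cite{Hof15} (for $\mathcal{F}=\mathcal{A}$) and Jarossay \cite{Jar14} (for $\mathcal{F}=\mathcal{S}$), so there is no internal argument to compare yours against. Judged on its own, your proposal is a plan rather than a proof, and the plan has a genuine gap: the one step that would carry all the content --- the ``sign-reversing involution'' in your first approach, or the ``single boundary identity'' in your second --- is never exhibited, and you explicitly defer it (``once the correct involution is identified, the cancellation is formal''). Concretely, the first approach breaks already at the pairing step: expanding $\tilde\zeta_{\mathcal F}^\star(\boldsymbol k)$ and $\tilde\zeta_{\mathcal F}^\star(\boldsymbol k^\vee)$ into non-star values gives sums over coarsenings of $\boldsymbol k$ and of $\boldsymbol k^\vee$ respectively; under your own breakpoint-set identification these correspond to subsets of $S$ and of its complement, so the two families of indices are essentially disjoint (they share only the depth-one term $(|\boldsymbol k|)$), and there is no term-by-term matching to cancel against. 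Any cancellation must therefore be routed through nontrivial rewriting, which you do not supply.

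More fundamentally, the strategy of deducing the duality purely from Propositions \ref{dep1}, \ref{symsumF}, and \ref{antipode} cannot work: those three statements are formal consequences of the harmonic (quasi-shuffle) product together with the vanishing of depth-one values, whereas Hoffman's duality is known to be an independent family of relations requiring extra input specific to each setting --- in the $\mathcal{A}$ case the congruence $\binom{p-1}{n}\equiv(-1)^n\pmod p$ (or an equivalent arithmetic fact about harmonic sums mod $p$), and in the $\mathcal{S}$ case properties of the harmonically regularized values entering the definition of $\zeta_{\mathcal S}^\star$. A correct proof has to bring in one of these inputs (as Hoffman and Jarossay do); no amount of bookkeeping of signs within the quasi-shuffle algebra alone will produce the statement. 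The appropriate resolution here is simply to cite \cite{Hof15} and \cite{Jar14}, as the paper does.
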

\begin{prop} \label{main_dep2}
 For positive integers $k_1,k_2$, we have
 \[
  \mathit{O}_\mathcal{F} (k_1,k_2)=0. 
 \]
\end{prop}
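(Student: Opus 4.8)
The plan is to compute the two sums making up $\mathit{O}_\mathcal{F}(k_1,k_2)$ separately, each as an explicit power series in the quantities $\tilde{\mathfrak{Z}}_\mathcal{F}(n)$, and then to check that their sum vanishes term by term.

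First I would treat $\sum_{\boldsymbol{e}}\tilde\zeta_\mathcal{F}^\star((k_1,k_2)\oplus\boldsymbol{e})$. Every summand has depth $2$, so Proposition \ref{dep2} applies directly: $\tilde\zeta_\mathcal{F}^\star(k_1+e_1,k_2+e_2)=(-1)^{k_1+e_1+1}\binom{k_1+k_2+e_1+e_2}{k_1+e_1}\tilde{\mathfrak{Z}}_\mathcal{F}(k_1+k_2+e_1+e_2)$. Summing over $e_1,e_2\ge0$ and collecting terms of a fixed weight $n$, this sum equals $\sum_{n}c_n\tilde{\mathfrak{Z}}_\mathcal{F}(n)$ with $c_n=\sum_{a=k_1}^{n-k_2}(-1)^{a+1}\binom{n}{a}$.

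The second sum $\sum_{\boldsymbol{e}}(-1)^{|\boldsymbol{e}|}\tilde\zeta_\mathcal{F}^\star(\boldsymbol{k}^\vee\oplus\boldsymbol{e})$ is the crux. Here $\boldsymbol{k}^\vee=(\{1\}^{k_1-1},2,\{1\}^{k_2-1})$ has depth $d:=k_1+k_2-1$, and as $\boldsymbol{e}$ runs over all nonnegative sequences of depth $d$, the index $\boldsymbol{k}^\vee\oplus\boldsymbol{e}$ runs exactly over the depth-$d$ indices whose $k_1$-th component is at least $2$, with $(-1)^{|\boldsymbol{e}|}=(-1)^{|\boldsymbol{k}^\vee\oplus\boldsymbol{e}|-(k_1+k_2)}$. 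I would split this as (the sum over all depth-$d$ indices) minus (the sum over those whose $k_1$-th component equals $1$). The first piece is, up to an overall sign, the second sum that appears in $\mathit{O}_\mathcal{F}(d)$, hence is $0$ by the proof of Proposition \ref{main_dep1}. So the second sum reduces to $\pm$ the sum of $(-1)^{|\boldsymbol{l}|}\tilde\zeta_\mathcal{F}^\star(\boldsymbol{l})$ over depth-$d$ indices $\boldsymbol{l}$ of the form $\boldsymbol{l}=(\boldsymbol{p},1,\boldsymbol{q})$ with $\boldsymbol{p}$ of depth $k_1-1$ and $\boldsymbol{q}$ of depth $k_2-1$.

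To finish, I would reduce this residual piece to depth-$\le2$ data: expand $\tilde\zeta_\mathcal{F}^\star(\boldsymbol{p},1,\boldsymbol{q})$ into non-star values by combining consecutive components, then apply the duality formula (Theorem \ref{dualFS}) and the antipode relation (Proposition \ref{antipode}) to re-express everything through products of lower-depth values, repeatedly discarding the symmetric sums that appear (killed by Proposition \ref{symsumF}, together with Proposition \ref{dep1} in depth $1$), until only depth-$\le2$ values survive; evaluating these by Propositions \ref{dep1} and \ref{dep2} again yields a series $\sum_n c_n'\tilde{\mathfrak{Z}}_\mathcal{F}(n)$. Adding the two series, the coefficient of $\tilde{\mathfrak{Z}}_\mathcal{F}(n)$ is an alternating sum of binomial coefficients which one checks vanishes for odd $n$ via repeated use of $\sum_a(-1)^a\binom{n}{a}=0$, while for even $n$ one has $\tilde{\mathfrak{Z}}_\mathcal{F}(n)=0$; hence $\mathit{O}_\mathcal{F}(k_1,k_2)=0$. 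The main obstacle is precisely this residual piece: since the forced $1$ sits at the fixed interior position $k_1$ it is genuinely not a symmetric sum, so it must be unwound carefully, treating the degenerate cases $k_1=1$ and $k_2=1$ separately, and a nontrivial binomial identity is needed for the final cancellation against the first sum. (One can instead use duality to rewrite the second sum as $-\sum_{\boldsymbol{e}}(-1)^{|\boldsymbol{e}|}\tilde\zeta_\mathcal{F}^\star((\boldsymbol{k}^\vee\oplus\boldsymbol{e})^\vee)$ and run an Oyama-type argument, but the combinatorial bookkeeping is comparable.)
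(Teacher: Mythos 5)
Your reduction is fine up to a point: evaluating the first sum by Proposition \ref{dep2}, observing that $\boldsymbol{k}^\vee\oplus\boldsymbol{e}$ runs over the depth-$(k_1+k_2-1)$ indices with entry $\ge 2$ in position $k_1$, and discarding the ``sum over all indices'' piece by the symmetric-sum argument of Proposition \ref{symsumF} are all correct. But the proof then stops exactly where the proposition becomes nontrivial: the residual piece, the signed sum of $\tilde\zeta_\mathcal{F}^\star(\boldsymbol{p},1,\boldsymbol{q})$ over all $\boldsymbol{p}$ of depth $k_1-1$ and $\boldsymbol{q}$ of depth $k_2-1$, is never computed. You only assert that expanding into non-star values, applying Theorem \ref{dualFS} and Proposition \ref{antipode}, and discarding symmetric sums will leave depth-$\le 2$ data, and you yourself flag this as ``the main obstacle.'' In fact the listed tools do not close this step. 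For example, with $k_1=k_2=2$ the star-to-nonstar expansion of $\tilde\zeta_\mathcal{F}^\star(p,1,q)$ leaves the depth-$3$ sum $\sum_{p,q}(-1)^{p+q}\tilde\zeta_\mathcal{F}(p,1,q)$; since depth-one values vanish, Proposition \ref{antipode} (or Proposition \ref{revF}) merely converts this back into the reversed star sum of the same shape, so the manipulation is circular and only yields a consistency identity, not a value. The forced $1$ at the interior position $k_1$ prevents any use of Proposition \ref{symsumF}, and Theorem \ref{dualFS} trades the family $(\boldsymbol{p},1,\boldsymbol{q})$ for hook-shaped indices of varying depth, which is no easier. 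What is genuinely needed here is a sum-formula or Ohno/Oyama-type input: either apply the star case of Theorem \ref{sumF} directly to the complementary sum over indices with entry $\ge 2$ in position $k_1$ (which immediately gives $F_{k_1,k_2}(X)$ up to sign and finishes the argument with your computation of the first sum), or argue as the paper does.

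For comparison, the paper's proof never computes either series: it uses Propositions \ref{symsumF} and \ref{antipode} to rewrite the second sum as non-star values of the reversed Hoffman dual, applies Oyama's Theorem \ref{ohnoF}, and then converts back with Theorem \ref{dualFS}, obtaining exactly the negative of the first sum. You mention an ``Oyama-type argument'' only parenthetically and without details; as written, the key step of your proposal is a plan rather than a proof, so the argument has a genuine gap.
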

\begin{proof}
 By Propositions \ref{symsumF} and \ref{antipode}, we have 
 \begin{align*} 
  &\sum_{\boldsymbol{e}}
    (-1)^{\left|\boldsymbol{e}\right|} \tilde\zeta_\mathcal{F}^\star ((\{1\}^{k_1-1},2,\{1\}^{k_2-1}) \oplus \boldsymbol{e}) \\
  &=\sum_{\boldsymbol{e}}
    (-1)^{k_1+k_2+\left|\boldsymbol{e}\right|} \tilde\zeta_\mathcal{F} ((\{1\}^{k_2-1},2,\{1\}^{k_1-1}) \oplus \boldsymbol{e}).
 \end{align*}
 By Theorem \ref{ohnoF}, we have
 \begin{align*} 
  &\sum_{\boldsymbol{e}}
    (-1)^{k_1+k_2+\left|\boldsymbol{e}\right|} \tilde\zeta_\mathcal{F} ((\{1\}^{k_2-1},2,\{1\}^{k_1-1}) \oplus \boldsymbol{e}) \\
  &=\sum_{\boldsymbol{e}}
    (-1)^{k_1+k_2+\left|\boldsymbol{e}\right|} \tilde\zeta_\mathcal{F} (((k_2,k_1) \oplus \boldsymbol{e})^\vee).
 \end{align*}
 By Propositions \ref{symsumF}, \ref{antipode}, and Theorem \ref{dualFS}, we have
 \begin{align*} 
  &\sum_{\boldsymbol{e}}
    (-1)^{k_1+k_2+\left|\boldsymbol{e}\right|} \tilde\zeta_\mathcal{F} (((k_2,k_1) \oplus \boldsymbol{e})^\vee) \\
  &=\sum_{\boldsymbol{e}}
    \tilde\zeta_\mathcal{F}^\star (((k_1,k_2) \oplus \boldsymbol{e})^\vee) \\
  &=-\sum_{\boldsymbol{e}}
    \tilde\zeta_\mathcal{F}^\star ((k_1,k_2) \oplus \boldsymbol{e}). 
 \end{align*}
 Then we have    
 \begin{align*}
  &\mathit{O}_\mathcal{F} (k_1,k_2) \\
  &=\sum_{\boldsymbol{e}}
    \tilde\zeta_\mathcal{F}^\star ((k_1,k_2) \oplus \boldsymbol{e})  
  +\sum_{\boldsymbol{e}}
    (-1)^{\left|\boldsymbol{e}\right|} \tilde\zeta_\mathcal{F}^\star ((\{1\}^{k_1-1},2,\{1\}^{k_2-1}) \oplus \boldsymbol{e}) \\
  &=0. \qedhere
 \end{align*}
\end{proof}

%%%%%%%%%%%%%%%%%%%%%%%%%%%%%%%%%%%%%%%%%%%%%%%%%%%%%%%%%%%%%%%%%%%%%%%%%%%%%%%%%%%%%%%%%%%%%%%%%%%%
\section{Proof of Theorem \ref{main}}
%%%%%%%%%%%%%%%%%%%%%%%%
\subsection{Properties of $\tilde\zeta_\mathcal{F}$ and $\tilde\zeta_\mathcal{F}^\star$}
To prove our main theorem (Theorem \ref{main}), we need Lemmas \ref{lemA}, \ref{lemB}, and \ref{PQ2}. 
The following known results will be used to prove these lemmas. 
\begin{prop}[Reversal formula] \label{revF}
 For an index $(k_1,\dots,k_r)$, we have
 \begin{align*}
   \zeta_\mathcal{F} (k_1,\dots,k_r)
   &=(-1)^{k_1+\cdots+k_r} \zeta_\mathcal{F} (k_r,\dots,k_1). 
 \end{align*}
\end{prop}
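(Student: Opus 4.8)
The plan is to verify the identity separately in the two cases $\mathcal{F}=\mathcal{A}$ and $\mathcal{F}=\mathcal{S}$, since the defining expressions have quite different shapes. In both cases the proof reduces to a change of summation index together with careful bookkeeping of signs, and no deep input is needed; in particular the regularization in the symmetric case enters only through commutativity of the product of regularized values.

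For the finite case $\mathcal{F}=\mathcal{A}$, I would work prime by prime and apply the substitution $m_i\mapsto p-m_{r+1-i}$ to the defining sum
\[
 \zeta_\mathcal{A}(k_1,\dots,k_r)=\Bigl(\sum_{1\le m_1<\dots<m_r<p}\frac{1}{m_1^{k_1}\dotsm m_r^{k_r}}\bmod p\Bigr)_p.
\]
Writing $n_i=p-m_{r+1-i}$, the map $(m_1,\dots,m_r)\mapsto(n_1,\dots,n_r)$ is a bijection of the set $\{1\le m_1<\dots<m_r\le p-1\}$ onto itself, because $x\mapsto p-x$ is an order-reversing bijection of $\{1,\dots,p-1\}$. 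Since $p-m\equiv-m\pmod p$, each factor $m_j^{-k_j}$ becomes $(-1)^{k_j}n_{r+1-j}^{-k_j}$ modulo $p$, so the product picks up the global sign $(-1)^{k_1+\dots+k_r}$; reindexing the resulting product by $l=r+1-j$ turns the exponent of $n_l$ into $k_{r+1-l}$, which is precisely the summand of $\zeta_\mathcal{A}(k_r,\dots,k_1)$. Passing to the class in $\mathcal{A}$ gives the claim for $\mathcal{F}=\mathcal{A}$.

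For the symmetric case $\mathcal{F}=\mathcal{S}$, I would start from the expansion
\[
 \zeta_\mathcal{S}(k_r,\dots,k_1)=\sum_{i=0}^{r}(-1)^{k_1+\dots+k_{r-i}}\,\zeta(k_r,\dots,k_{r+1-i})\,\zeta(k_1,\dots,k_{r-i})
\]
obtained by substituting the reversed index into the definition of $\zeta_\mathcal{S}$. Replacing the summation variable $i$ by $r-i$ and using commutativity of the product of harmonically regularized values, each term becomes $(-1)^{k_1+\dots+k_i}\zeta(k_1,\dots,k_i)\zeta(k_r,\dots,k_{i+1})$, which is the $i$-th summand in the definition of $\zeta_\mathcal{S}(k_1,\dots,k_r)$ apart from its sign. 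The two signs are related by the elementary identity $(-1)^{k_{i+1}+\dots+k_r}=(-1)^{k_1+\dots+k_r}(-1)^{k_1+\dots+k_i}$, so every term of $\zeta_\mathcal{S}(k_1,\dots,k_r)$ equals $(-1)^{k_1+\dots+k_r}$ times the corresponding term of $\zeta_\mathcal{S}(k_r,\dots,k_1)$; summing over $i$ yields the result modulo $\zeta(2)$.

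The only error-prone part is the index bookkeeping: in the finite case one must confirm that the substitution really is a bijection of the strictly increasing tuples, and in the symmetric case one must track the two reindexings and the two sign exponents simultaneously. I expect no genuine obstacle, since the reversal is in both cases a purely combinatorial rearrangement of the defining expressions, with nothing about the regularization used beyond commutativity of the product.
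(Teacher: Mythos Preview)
Your proof is correct. The paper itself does not supply a proof of this proposition; it simply states the reversal formula as a known result and proceeds, so there is nothing in the paper to compare your argument against. Your approach---the substitution $m_i\mapsto p-m_{r+1-i}$ for the $\mathcal{A}$ case and the reindexing $i\mapsto r-i$ combined with commutativity of the regularized product for the $\mathcal{S}$ case---is the standard way to establish this identity, and your bookkeeping of indices and signs is accurate in both cases.
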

\begin{prop} \label{112}
 For nonnegative integers $a$ and $b$, we have
 \begin{align*}
  \tilde\zeta_\mathcal{F} (\{1\}^{a},2,\{1\}^{b}) 
  =\tilde\zeta_\mathcal{F}^{\star} (\{1\}^{a},2,\{1\}^{b}) 
  =(-1)^{a+1} \binom{a+b+2}{a+1} \tilde{\mathfrak{Z}}_\mathcal{F}(a+b+2).
 \end{align*}
\end{prop}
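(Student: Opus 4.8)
The plan is to prove the two displayed equalities in turn: first to evaluate $\tilde\zeta_\mathcal{F}^\star(\{1\}^{a},2,\{1\}^{b})$, and then to show that $\tilde\zeta_\mathcal{F}(\{1\}^{a},2,\{1\}^{b})$ coincides with it.

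For the star value, I would begin by identifying Hoffman's dual of $(\{1\}^{a},2,\{1\}^{b})$. Viewing this index as a composition of $n=a+b+2$, the set of its partial sums that are strictly smaller than $n$ is $\{1,\dots,a+b+1\}\setminus\{a+1\}$, whose complement in $\{1,\dots,n-1\}$ is the single element $a+1$; hence $(\{1\}^{a},2,\{1\}^{b})^{\vee}=(a+1,b+1)$. Theorem \ref{dualFS} then gives $\tilde\zeta_\mathcal{F}^\star(\{1\}^{a},2,\{1\}^{b})=-\tilde\zeta_\mathcal{F}^\star(a+1,b+1)$, and Proposition \ref{dep2} evaluates the right-hand side; a short computation with the sign turns this into the asserted value $(-1)^{a+1}\binom{a+b+2}{a+1}\tilde{\mathfrak{Z}}_\mathcal{F}(a+b+2)$.

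For the non-star value, rather than expanding $\tilde\zeta_\mathcal{F}^\star$ into non-star values directly — which would force us to control contractions such as $(\{1\}^{a-1},3,\{1\}^{b})$ that are not visibly zero and would require an induction on the weight — I would instead apply the antipode relation of Proposition \ref{antipode} to the index $(\{1\}^{a},2,\{1\}^{b})$. In the sum $\sum_{i=0}^{r}(-1)^{i}\tilde\zeta_\mathcal{F}^\star(k_1,\dots,k_i)\tilde\zeta_\mathcal{F}(k_r,\dots,k_{i+1})$ with $r=a+b+1$, the part $2$ lies in exactly one of the two sub-indices $(k_1,\dots,k_i)$ and $(k_{i+1},\dots,k_r)$, so for each $i$ with $1\le i\le r-1$ the other sub-index is an all-one index of positive length, and its factor vanishes by Propositions \ref{dep1} and \ref{symsumF}. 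Hence only $i=0$ and $i=r$ survive, yielding $\tilde\zeta_\mathcal{F}(\{1\}^{b},2,\{1\}^{a})=(-1)^{a+b}\tilde\zeta_\mathcal{F}^\star(\{1\}^{a},2,\{1\}^{b})$. Since the reversal formula (Proposition \ref{revF}) gives $\tilde\zeta_\mathcal{F}(\{1\}^{b},2,\{1\}^{a})=(-1)^{a+b}\tilde\zeta_\mathcal{F}(\{1\}^{a},2,\{1\}^{b})$ — here $a+b+2$ and $a+b$ have the same parity — cancelling $(-1)^{a+b}$ gives $\tilde\zeta_\mathcal{F}(\{1\}^{a},2,\{1\}^{b})=\tilde\zeta_\mathcal{F}^\star(\{1\}^{a},2,\{1\}^{b})$, and combining with the first step completes the proof.

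I do not anticipate a serious obstacle once the antipode relation is brought in; the crux is recognising that its $a+b+2$ terms collapse to just two. The only point requiring a little care is the degenerate range: when $a=0$ the sub-index $(k_1,\dots,k_i)$ need not consist of ones, but then $(k_{i+1},\dots,k_r)$ is all-one of positive length for every $1\le i\le r-1$, so the cancellation still applies, and symmetrically when $b=0$; beyond that it is a matter of tracking the signs produced by Theorem \ref{dualFS}, Proposition \ref{dep2}, and Proposition \ref{revF}.
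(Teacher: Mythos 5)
Your proposal is correct and follows essentially the same route as the paper: the star value is evaluated via Theorem \ref{dualFS} together with Proposition \ref{dep2} (using $(\{1\}^a,2,\{1\}^b)^\vee=(a+1,b+1)$), and the equality of $\tilde\zeta_\mathcal{F}$ and $\tilde\zeta_\mathcal{F}^\star$ comes from Proposition \ref{antipode} combined with the vanishing of all-one indices (Proposition \ref{symsumF}) and the reversal formula (Proposition \ref{revF}). You merely spell out the collapse of the antipode sum and the sign bookkeeping that the paper leaves implicit.
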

\begin{proof}
 By Propositions \ref{symsumF}, \ref{antipode}, and \ref{revF}, we have
 \begin{align*}
  \tilde\zeta_\mathcal{F} (\{1\}^{a},2,\{1\}^{b}) 
  &=\tilde\zeta_\mathcal{F}^{\star} (\{1\}^{a},2,\{1\}^{b}).
 \end{align*} 
 By Proposition \ref{dep2} and Theorem \ref{dualFS}, we have
 \begin{align*}
  \tilde\zeta_\mathcal{F}^{\star} (\{1\}^{a},2,\{1\}^{b}) 
  &=-\tilde\zeta_\mathcal{F}^{\star} (a+1,b+1) \\
  &=(-1)^{a+1} \binom{a+b+2}{a+1} \tilde{\mathfrak{Z}}_\mathcal{F}(a+b+2).
 \end{align*} 
 This finishes the proof. 
\end{proof}
\begin{cor} \label{112no2}
 For nonnegative integers $a$ and $b$, we have
 \begin{align*}
  \tilde\zeta_\mathcal{F}^{\star} (\{1\}^{a},2,\{1\}^{b}) 
  =(-1)^{a+b+1} \tilde\zeta_\mathcal{F}^{\star} (\{1\}^{a},2,\{1\}^{b}).
 \end{align*}
\end{cor}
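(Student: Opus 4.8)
The plan is to read off Corollary~\ref{112no2} from the explicit evaluation in Proposition~\ref{112}. Recall that Proposition~\ref{112} gives
\[
 \tilde\zeta_\mathcal{F}^{\star}(\{1\}^{a},2,\{1\}^{b})=(-1)^{a+1}\binom{a+b+2}{a+1}\tilde{\mathfrak{Z}}_\mathcal{F}(a+b+2),
\]
so that the left-hand side of the corollary is a scalar multiple of $\tilde{\mathfrak{Z}}_\mathcal{F}(a+b+2)=\mathfrak{Z}_\mathcal{F}(a+b+2)X^{a+b+2}$. The only extra ingredient I would invoke is the classical vanishing $\mathfrak{Z}_\mathcal{F}(2n)=0$ for every positive integer $n$: for $\mathcal{F}=\mathcal{S}$ because $\zeta(2n)$ is a rational multiple of $\zeta(2)^n$ and hence lies in $\zeta(2)\mathcal{Z}$, and for $\mathcal{F}=\mathcal{A}$ because $p-2n$ is an odd integer $\ge 3$ for all but finitely many primes $p$, forcing $B_{p-2n}=0$ for those $p$.

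With this in hand I would argue according to the parity of $a+b$. If $a+b$ is odd, then $(-1)^{a+b+1}=1$ and the asserted identity is trivial. If $a+b$ is even, then $a+b+2$ is even, so $\mathfrak{Z}_\mathcal{F}(a+b+2)=0$ and therefore $\tilde{\mathfrak{Z}}_\mathcal{F}(a+b+2)=0$; Proposition~\ref{112} then yields $\tilde\zeta_\mathcal{F}^{\star}(\{1\}^{a},2,\{1\}^{b})=0$, and since $(-1)^{a+b+1}=-1$ in this case, the identity reduces to $0=-0$.

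I do not expect any genuine obstacle here: the corollary is pure bookkeeping on top of Proposition~\ref{112}, and the only statement needing a word of justification is the even-argument vanishing of $\mathfrak{Z}_\mathcal{F}$, which is standard and is in any event recorded (and used) in the remark following Theorem~\ref{main}.
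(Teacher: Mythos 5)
Your proof is correct and matches the paper's (implicit) reasoning: the corollary is left without proof there precisely because it follows from Proposition~\ref{112} together with the even-weight vanishing $\mathfrak{Z}_\mathcal{F}(2n)=0$, i.e.\ the identity $(-1)^{n+1}\mathfrak{Z}_\mathcal{F}(n)=\mathfrak{Z}_\mathcal{F}(n)$ that the authors invoke elsewhere. Your parity case analysis and the justifications of the vanishing for both $\mathcal{F}=\mathcal{A}$ and $\mathcal{F}=\mathcal{S}$ are exactly right.
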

\begin{thm}[Sum formula; Saito-Wakabayashi \cite{SW15}, Murahara \cite{Mur15}] \label{sumF}
 For nonnegative integers $i$ and $j$, we have
 \begin{align*}
  \sum_{\substack{ k_1,\dots,k_{i+j+1}\ge1 \\ k_{i+1}\ge2 }} \tilde\zeta_\mathcal{F} (k_1,\dots,k_{i+j+1}) 
  &=F_{i+1,j+1}(X), \\
  \sum_{\substack{ k_1,\dots,k_{i+j+1}\ge1 \\ k_{i+1}\ge2 }} \tilde\zeta_\mathcal{F}^\star (k_1,\dots,k_{i+j+1})   
  &=(-1)^{i+j+1} F_{i+1,j+1}(X). 
 \end{align*}
\end{thm}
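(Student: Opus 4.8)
The plan is to establish the two equalities in turn: the one for $\tilde\zeta_\mathcal{F}$ from Oyama's theorem (Theorem~\ref{ohnoF}), and the one for $\tilde\zeta_\mathcal{F}^\star$ from the antipode relation (Proposition~\ref{antipode}). Throughout I would use that, for every $s\ge1$, the ``full'' depth-$s$ sums $\sum_{k_1,\dots,k_s\ge1}\tilde\zeta_\mathcal{F}(k_1,\dots,k_s)$ and $\sum_{k_1,\dots,k_s\ge1}\tilde\zeta_\mathcal{F}^\star(k_1,\dots,k_s)$ vanish: summing the identities of Proposition~\ref{symsumF} over all depth-$s$ indices and noting that $(k_1,\dots,k_s)\mapsto(k_{\sigma(1)},\dots,k_{\sigma(s)})$ permutes that set for each $\sigma\in S_s$ yields $s!$ times the full sum, which is $0$. (All manipulations below take place in the ring of formal power series in $X$ whose $X^n$-coefficients are finite sums, so the reorderings involved are harmless.)

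For the $\tilde\zeta_\mathcal{F}$-equality, I would first record the combinatorial fact $(\{1\}^i,2,\{1\}^j)^{\vee}=(i+1,j+1)$, which follows directly from the definition of Hoffman's dual (and is already used implicitly in the proof of Proposition~\ref{112}). Applying Theorem~\ref{ohnoF} with $\boldsymbol{k}=(\{1\}^i,2,\{1\}^j)$, multiplying through by $X^{|\boldsymbol{k}|+m}$, and summing over $m\ge0$ converts the left-hand side into $\sum_{k_{i+1}\ge2}\tilde\zeta_\mathcal{F}(k_1,\dots,k_{i+j+1})$ and the right-hand side into $\sum_{k_1\ge i+1,\,k_2\ge j+1}\tilde\zeta_\mathcal{F}\bigl((k_1,k_2)^{\vee}\bigr)=\sum_{k_1\ge i+1,\,k_2\ge j+1}\tilde\zeta_\mathcal{F}(\{1\}^{k_1-1},2,\{1\}^{k_2-1})$. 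By Proposition~\ref{112} each summand on the right equals $(-1)^{k_1}\binom{k_1+k_2}{k_1}\tilde{\mathfrak{Z}}_\mathcal{F}(k_1+k_2)$; grouping the terms with $k_1+k_2=n$ and using the telescoping identity $\sum_{k=0}^{m}(-1)^k\binom{n}{k}=(-1)^m\binom{n-1}{m}$ shows that $\sum_{k_1=i+1}^{n-j-1}(-1)^{k_1}\binom{n}{k_1}=(-1)^{n-j-1}\binom{n-1}{j}-(-1)^i\binom{n-1}{i}$. Since $\mathfrak{Z}_\mathcal{F}(n)=0$ for even $n$, only odd $n$ contribute, for which this equals $(-1)^{i+1}\binom{n-1}{i}-(-1)^{j+1}\binom{n-1}{j}$, i.e.\ the coefficient of $\mathfrak{Z}_\mathcal{F}(n)X^n$ in $F_{i+1,j+1}(X)$. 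Hence the right-hand side is $F_{i+1,j+1}(X)$, as claimed.

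For the $\tilde\zeta_\mathcal{F}^\star$-equality, set $r=i+j+1$ and apply Proposition~\ref{antipode} to an arbitrary index of depth $r$, then sum over all such indices with $k_{i+1}\ge2$. In the term indexed by $l$ with $1\le l\le r-1$, exactly one of the two factors $\tilde\zeta_\mathcal{F}^\star(k_1,\dots,k_l)$ and $\tilde\zeta_\mathcal{F}(k_r,\dots,k_{l+1})$ involves only the unconstrained positions, and after summation it becomes a full depth-$l$ (respectively depth-$(r-l)$) sum, hence vanishes; so only $l=0$ and $l=r$ survive, giving $\sum_{k_{i+1}\ge2}\tilde\zeta_\mathcal{F}(k_r,\dots,k_1)+(-1)^r\sum_{k_{i+1}\ge2}\tilde\zeta_\mathcal{F}^\star(k_1,\dots,k_r)=0$. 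Reversing the index in the first sum identifies it with the $\tilde\zeta_\mathcal{F}$-sum whose constrained position is $j+1$, which by the first part equals $F_{j+1,i+1}(X)=-F_{i+1,j+1}(X)$ (the antisymmetry $F_{a,b}=-F_{b,a}$ being immediate from the definition of $F$). Solving yields $\sum_{k_{i+1}\ge2}\tilde\zeta_\mathcal{F}^\star(k_1,\dots,k_r)=(-1)^rF_{i+1,j+1}(X)=(-1)^{i+j+1}F_{i+1,j+1}(X)$.

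I expect the difficulty to be entirely bookkeeping rather than conceptual: verifying $(\{1\}^i,2,\{1\}^j)^{\vee}=(i+1,j+1)$, tracking the signs through the telescoping binomial sum and through the antipode summation, and confirming that each $\sum_m$ and $\sum_{\boldsymbol{e}}$ rearrangement is legitimate at the level of formal power series. (The statement itself is of course due to Saito--Wakabayashi and Murahara; the above is simply how one can recover it from the results collected in this paper.)
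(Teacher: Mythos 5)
Your proposal is correct, but note that it cannot coincide with ``the paper's proof'': the paper does not prove Theorem \ref{sumF} at all, it quotes it from Saito--Wakabayashi and Murahara. What you give is a self-contained derivation from the other results collected in the paper, and the steps check out: the vanishing of the full depth-$s$ sums does follow from Proposition \ref{symsumF} by the averaging argument you describe (coefficientwise in $X$ all sums are finite); $(\{1\}^i,2,\{1\}^j)^\vee=(i+1,j+1)$, so Theorem \ref{ohnoF} converts the constrained depth-$(i+j+1)$ sum into $\sum_{a\ge i+1,\,b\ge j+1}\tilde\zeta_\mathcal{F}(\{1\}^{a-1},2,\{1\}^{b-1})$, which Proposition \ref{112} plus the telescoping identity $\sum_{k=0}^m(-1)^k\binom{n}{k}=(-1)^m\binom{n-1}{m}$ turns into $F_{i+1,j+1}(X)$; and the star case then falls out of Proposition \ref{antipode} together with the antisymmetry $F_{b,a}(X)=-F_{a,b}(X)$, with the correct sign $(-1)^{i+j+1}$. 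Two points deserve emphasis. First, your coefficient identity matches that of $F_{i+1,j+1}(X)$ only in odd weight, so the clause ``only odd $n$ contribute'' is not cosmetic: the argument genuinely requires $\mathfrak{Z}_\mathcal{F}(2n)=0$, a fact the paper asserts in a remark (odd-index Bernoulli numbers vanish, and $\zeta(2n)\in\zeta(2)\mathcal{Z}$); it is legitimate to use it, but it should be flagged as an ingredient of the proof rather than absorbed silently. Second, your route trades the comparatively elementary original proofs for a much stronger input, Oyama's Ohno-type relation; within this paper that creates no circularity, since none of Theorem \ref{ohnoF}, Propositions \ref{dep2}, \ref{112}, \ref{symsumF}, \ref{antipode}, or Theorem \ref{dualFS} is deduced from Theorem \ref{sumF} here, and what the detour buys is a uniform treatment of the $\tilde\zeta_\mathcal{F}$ and $\tilde\zeta_\mathcal{F}^\star$ statements entirely from the paper's own toolkit, whereas the citation route rests on independent, more direct proofs in the literature.
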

We denote by $\mathcal{I}$ the space of formal $\mathbb{Q}$-linear combinations of indices. 
We define a $\mathbb{Q}$-bilinear product $\sha$ on $\mathcal{I}$ inductively by setting
\begin{align*}
 \boldsymbol{k} \sha \emptyset 
 &=\emptyset \sha \boldsymbol{k}=\boldsymbol{k}, \\ 
 (k _1, \boldsymbol{k}) \sha (l_1,\boldsymbol{l}) 
 &=(k_1, \boldsymbol{k} \sha (l_1,\boldsymbol{l})) + (l_1, (k_1,\boldsymbol{k}) \sha \boldsymbol{l})
\end{align*} 
for all indices $\boldsymbol{k}$, $\boldsymbol{l}$ and all positive integers $k_1$, $l_1$. 
We $\mathbb{Q}$-linearly extend $\zeta_\mathcal{F}$, $\zeta_\mathcal{F}^{\star}$, $\tilde\zeta_\mathcal{F}$, and $\tilde\zeta_\mathcal{F}^{\star}$ to $\mathcal{I}$. 
\begin{thm}[Hirose-Imatomi-Murahara-Saito {\cite[Lemma 2.5]{HIMS18}}] \label{ohnoFS}
 For a nonempty index $\boldsymbol{k}$ and a nonnegative integer $m$, we have
 \[ 
  \tilde\zeta_\mathcal{F}^\star (\boldsymbol{k} \sha (\{1\}^m) )
  =\sum_{ \left|\boldsymbol{e}\right|=m }
  \tilde\zeta_\mathcal{F}^\star (\boldsymbol{k}\oplus\boldsymbol{e}). 
 \]
\end{thm}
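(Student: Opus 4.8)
The plan is to notice first that the asserted identity carries a common power of $X$. Every index appearing in $\boldsymbol{k}\sha(\{1\}^m)$ has weight $\left|\boldsymbol{k}\right|+m$, and so does every $\boldsymbol{k}\oplus\boldsymbol{e}$ with $\left|\boldsymbol{e}\right|=m$; hence the factor $X^{\left|\boldsymbol{k}\right|+m}$ is shared, and it suffices to prove the untwisted identity $\zeta_\mathcal{F}^\star(\boldsymbol{k}\sha(\{1\}^m))=\sum_{\left|\boldsymbol{e}\right|=m}\zeta_\mathcal{F}^\star(\boldsymbol{k}\oplus\boldsymbol{e})$. I would then record the explicit expansion of the shuffle: writing $\boldsymbol{k}=(k_1,\dots,k_r)$, a straightforward induction on the recursive definition of $\sha$ gives
\[
 \boldsymbol{k}\sha(\{1\}^m)=\sum_{g_0+\dots+g_r=m}(\{1\}^{g_0},k_1,\{1\}^{g_1},\dots,k_r,\{1\}^{g_r}),
\]
the sum ranging over compositions of $m$ into $r+1$ nonnegative parts.

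For $\mathcal{F}=\mathcal{A}$ the plan is a generating-function computation in an auxiliary variable $t$, carried out prime by prime. Summing the Ohno side over $m$ turns each component into a geometric series, so in the $p$-th component
\[
 \sum_{m\ge0}t^m\sum_{\left|\boldsymbol{e}\right|=m}\zeta_\mathcal{A}^\star(\boldsymbol{k}\oplus\boldsymbol{e})
 \equiv\sum_{1\le m_1\le\dots\le m_r<p}\prod_{i=1}^r\frac{m_i^{-k_i}}{1-t/m_i}\pmod p.
\]
For the shuffle side I would use the expansion above: the block of $g_i$ inserted ones lying in the $i$-th gap contributes a complete-homogeneous sum with generating function $\prod_{m_i\le n\le m_{i+1}}(1-t/n)^{-1}$. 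Collecting all gaps, each interior value is covered once while each pivot $m_i$ is the shared endpoint of two adjacent gaps, so
\[
 \sum_{m\ge0}t^m\zeta_\mathcal{A}^\star(\boldsymbol{k}\sha(\{1\}^m))
 \equiv\Bigl(\prod_{n=1}^{p-1}\tfrac{1}{1-t/n}\Bigr)\sum_{1\le m_1\le\dots\le m_r<p}\prod_{i=1}^r\frac{m_i^{-k_i}}{1-t/m_i}\pmod p.
\]
Thus the two generating functions differ only by the factor $P_p(t)=\prod_{n=1}^{p-1}(1-t/n)^{-1}$, the only mild bookkeeping being that coinciding pivots $m_i=m_{i+1}$ raise the multiplicity of that value consistently and leave the factorization intact.

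The whole $\mathcal{A}$ argument then rests on the elementary congruence $\prod_{n=1}^{p-1}(1-t/n)\equiv1-t^{p-1}\pmod p$, which holds because $n^{-1}$ runs over $\mathbb{F}_p^\times$ and $\prod_{a\in\mathbb{F}_p^\times}(1-ta)=1-t^{p-1}$; hence $P_p(t)\equiv1+t^{p-1}+t^{2(p-1)}+\dots$. For a fixed $m$ and every prime $p>m+1$ the factor $P_p(t)$ does not affect the coefficient of $t^m$, so the two sides agree modulo $p$ for all but finitely many $p$, which is exactly equality in $\mathcal{A}$.

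The hard part will be the symmetric case $\mathcal{F}=\mathcal{S}$, where the prime truncation is unavailable and a naive transcription would even manufacture a spurious correction factor; the two sides must instead be matched through the harmonic (or shuffle) regularization of real multiple zeta-star values. Concretely I would substitute for $\zeta_\mathcal{S}^\star$ its defining alternating product of regularized $\zeta^\star$'s and re-run the generating-function bookkeeping, the delicate point being to verify that the regularization parameter coming from $\zeta(1)$ and the even zeta values (which lie in $\zeta(2)\mathcal{Z}$) conspire so that, modulo $\zeta(2)\mathcal{Z}$, the shuffle side collapses onto the geometric-series expression with no surviving correction. This step plays the role that $\prod_{n=1}^{p-1}(1-t/n)\equiv1-t^{p-1}$ plays in the finite case, and I expect it to demand the most care. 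An attractive alternative would be to deduce the $\mathcal{S}$ statement from the $\mathcal{A}$ statement using only relations common to both realizations, namely the vanishing of symmetric sums (Proposition \ref{symsumF}), reversal (Proposition \ref{revF}), and duality (Theorem \ref{dualFS}), which would yield a single uniform proof.
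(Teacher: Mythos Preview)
The present paper does not prove this theorem at all: it is quoted from \cite[Lemma~2.5]{HIMS18} and used as a black box. So there is no ``paper's proof'' to compare against; what you have written is already more than the paper offers.

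Your argument for $\mathcal{F}=\mathcal{A}$ is correct. The generating-function identity you derive, together with the Wilson-type congruence $\prod_{n=1}^{p-1}(1-t/n)\equiv 1-t^{p-1}\pmod p$, cleanly shows that the two sides agree modulo $p$ in degree $m$ for every $p>m+1$, which is exactly what is needed in $\mathcal{A}$. The bookkeeping with coinciding pivots is also handled correctly.

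The genuine gap is the case $\mathcal{F}=\mathcal{S}$. You do not prove it; you describe two possible strategies and carry out neither. The second strategy, ``deduce the $\mathcal{S}$ statement from the $\mathcal{A}$ statement using only relations common to both realizations,'' is miscast: an identity holding in $\mathcal{A}$ does not imply the corresponding identity in $\mathcal{S}$ unless one assumes the Kaneko--Zagier conjecture. What \emph{would} be legitimate is to give a single argument that uses only properties valid simultaneously in $\mathcal{A}$ and $\mathcal{S}$, but then the $\mathcal{A}$ computation you did is irrelevant and you are back to square one. The first strategy---unfolding the definition of $\zeta_{\mathcal{S}}^\star$ as an alternating sum of regularized $\zeta^\star$'s and tracking the regularization parameter---is plausible but you have not shown that the analogue of the factor $P_p(t)$ actually collapses modulo $\zeta(2)\mathcal{Z}$; this is precisely the step that requires work, and you have supplied none of it. The original reference \cite{HIMS18} presumably proves the identity by a uniform algebraic argument (at the level of words in Hoffman's algebra, before any evaluation), which avoids the $\mathcal{A}/\mathcal{S}$ case split altogether; that is the missing idea here.
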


%%%%%%%%%%%%%%%%%%%%%%%%
\subsection{Calculation of $\sum_{\boldsymbol{e}} \tilde\zeta_\mathcal{F}^{\star} ((k_1,k_2,k_3) \oplus \boldsymbol{e})$}
We use Hoffman's algebraic setup with a slightly different convention (see \cite{Hof97}).
Let $\mathfrak{H}=\mathbb{Q} \left\langle x,y \right\rangle$ be the noncommutative polynomial ring in two indeterminates $x$ and $y$. 
We define a $\mathbb{Q}$-linear map $p\colon y\mathfrak{H}y\to \mathcal{I}$ by $p(yx^{k_1-1}y\cdots x^{k_r-1}y)=(k_1,\dots,k_r)$. 
For positive integers $l_1,l_2,l_3$, and a nonegative integer $m$, we define a polynomial $P_{m}(l_{1},l_{2},l_{3})$ in $\mathfrak{H}$ by
\begin{align*}
 P_{m}(l_{1},l_{2},l_{3}) 
  &=(-1)^{m} \sum_{ \substack{ e_1+e_2+e_3=m \\ e_1,e_2,e_3\ge0 }}
  y^{l_{1}+e_1}xy^{l_{2}+e_2-1}xy^{l_{3}+e_3}. 
\end{align*}
\begin{lem} \label{lemA}
 For positive integers $k_1,k_2,k_3$, we have
 \begin{align*} 
  &\sum_{\boldsymbol{e}} 
   \tilde\zeta_\mathcal{F}^{\star} ((k_1,k_2,k_3) \oplus \boldsymbol{e}) \\
  &=F_{k_1,1}(X) F_{k_3,1}(X)
   -\sum_{\substack{i+j\le k_2+1 \\ i,j \ge2}}
   \sum_{\substack{ n_1 \ge k_1+i-1 \\ n_3 \ge k_3+j-1 }}
    (-1)^{i} \binom{n_1}{i-1} 
    \tilde{\mathfrak{Z}}_\mathcal{F}(n_1)
    \times
    (-1)^{j} \binom{n_3}{j-1}
    \tilde{\mathfrak{Z}}_\mathcal{F}(n_3) \\
  &\quad +(-1)^{k_1+k_2+k_3} \sum_{m\ge0} 
   \tilde{\zeta}_{\mathcal{F}} (p(P_m(k_3,k_2,k_1))).
 \end{align*}
% We note that when $k_2=1,2$, the right-hand side is equal to 
% \[
%  F_{k_1,1}(X) F_{k_3,1}(X)
%  +(-1)^{k_1+k_2+k_3} \sum_{m\ge0} \tilde{\zeta}_{\mathcal{F}} (p(P_m(k_3,k_2,k_1))).
% \]
\end{lem}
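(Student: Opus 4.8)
The plan is to reduce the left-hand side, via Theorem~\ref{ohnoFS}, to an explicit sum over four nonnegative parameters, and then to evaluate that sum by a case analysis that separates a ``reducible'' part---producing $F_{k_1,1}(X)F_{k_3,1}(X)$ and the bilinear correction in the $\tilde{\mathfrak{Z}}_\mathcal{F}$'s---from a genuinely depth-three remainder that is repackaged via the polynomials $P_m$. Concretely, applying Theorem~\ref{ohnoFS} to $\boldsymbol{k}=(k_1,k_2,k_3)$ and summing over all $m\ge0$ gives
\[
 \sum_{\boldsymbol{e}}\tilde\zeta_\mathcal{F}^{\star}((k_1,k_2,k_3)\oplus\boldsymbol{e})
 =\sum_{m\ge0}\tilde\zeta_\mathcal{F}^{\star}\bigl((k_1,k_2,k_3)\sha(\{1\}^{m})\bigr),
\]
and expanding the shuffle of a length-three index with the all-ones index, $(k_1,k_2,k_3)\sha(\{1\}^{m})=\sum_{a+b+c+d=m}(\{1\}^{a},k_1,\{1\}^{b},k_2,\{1\}^{c},k_3,\{1\}^{d})$, reduces the problem to computing $\Sigma:=\sum_{a,b,c,d\ge0}\tilde\zeta_\mathcal{F}^{\star}(\{1\}^{a},k_1,\{1\}^{b},k_2,\{1\}^{c},k_3,\{1\}^{d})$.

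To evaluate $\Sigma$ I would pass through Hoffman's algebraic setup, writing each summand as $\tilde\zeta_\mathcal{F}^{\star}(p(y^{a+1}x^{k_1-1}y^{b+1}x^{k_2-1}y^{c+1}x^{k_3-1}y^{d+1}))$, so that the generating series in $a,b,c,d$ is governed by a word of the shape $y^{\bullet}x^{k_1-1}y^{\bullet}x^{k_2-1}y^{\bullet}x^{k_3-1}y^{\bullet}$. Using the duality formula (Theorem~\ref{dualFS}), the reversal formula (Proposition~\ref{revF}) and the antipode relation (Proposition~\ref{antipode}), one splits off the summands in which the relevant blocks of $1$'s collapse, so that the term reduces to a product of indices of depth $\le2$; those are evaluated by Propositions~\ref{dep1}, \ref{dep2} and \ref{112} together with the sum formula (Theorem~\ref{sumF}), and they assemble into $F_{k_1,1}(X)F_{k_3,1}(X)$ and the bilinear expression in the $\tilde{\mathfrak{Z}}_\mathcal{F}$'s, the constraint $i+j\le k_2+1$ recording the ways the middle block $\{1\}^{k_2-1}$ of the dual word can be divided between the two ends. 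The remaining, genuinely depth-three terms are by construction indexed by the monomials $y^{k_3+e_1}xy^{k_2+e_2-1}xy^{k_1+e_3}$---the reversal of $(k_1,k_2,k_3)$ to $(k_3,k_2,k_1)$ and the sign $(-1)^{k_1+k_2+k_3}$ coming from Proposition~\ref{revF} applied to indices of weight $k_1+k_2+k_3+m$---and summing over $e_1,e_2,e_3$ rewrites them as $(-1)^{k_1+k_2+k_3}\sum_{m\ge0}\tilde\zeta_\mathcal{F}(p(P_m(k_3,k_2,k_1)))$. Adding the two parts yields the asserted identity.

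The main obstacle is the bookkeeping in this second step. One must correctly enumerate and sign the degenerate cases generated by iterating duality and the antipode relation (some of $a,b,c,d$ or of $k_1-1,k_2-1,k_3-1$ vanishing, adjacent $2$'s merging into a $3$, runs of $1$'s being absorbed), and then verify that the reducible contributions recombine into exactly $F_{k_1,1}(X)F_{k_3,1}(X)$ minus the stated bilinear term with nothing left over, so that the complementary part matches $P_m$ precisely. The non-degenerate part essentially matches the definition of $P_m$ once the dual word is written out, so the real work lies in cleanly isolating it from the reducible remainder.
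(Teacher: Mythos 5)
Your opening move is valid but it is a detour that the rest of the plan cannot recover from. After applying Theorem~\ref{ohnoFS} and expanding $(k_1,k_2,k_3)\sha(\{1\}^m)$, you are left with star values of general indices $(\{1\}^{a},k_1,\{1\}^{b},k_2,\{1\}^{c},k_3,\{1\}^{d})$, i.e.\ words containing $k_1+k_2+k_3-3$ letters $x$. None of the tools you cite reduces these to depth $\le 2$: the antipode relation (Proposition~\ref{antipode}) cuts such an index into pieces that are again general indices of depth $\ge 3$, which are not evaluable by Propositions~\ref{dep1}, \ref{dep2}, \ref{112} or Theorem~\ref{sumF}, and Hoffman-dualizing the shuffled indices does not produce hook-shaped words $(\{1\}^{a},2,\{1\}^{b})$ either. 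The mechanism that actually works --- and is the paper's proof --- is to apply the duality formula (Theorem~\ref{dualFS}) directly to each $(k_1,k_2,k_3)\oplus\boldsymbol{e}$, so that every summand becomes $-\tilde\zeta_\mathcal{F}^\star(\{1\}^{k_1+e_1-1},2,\{1\}^{k_2+e_2-2},2,\{1\}^{k_3+e_3-1})$, a word with exactly two $x$'s whatever $\boldsymbol{e}$ is. For words of that special shape, Propositions~\ref{symsumF} and \ref{antipode} split the star value into products of two hooks, each evaluated by Proposition~\ref{112}, plus a single reversed two-$x$ term; those reversed terms, with their signs, are literally the monomials of $P_m(k_3,k_2,k_1)$, which is the only reason $P_m$ appears. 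In your route the ``remaining depth-three terms'' would not be two-$x$ words at all, so the asserted match with $P_m$ is unjustified: duality must be applied before any cutting, to the unshuffled indices, and the shuffle expansion via Theorem~\ref{ohnoFS} is neither needed nor helpful here (it is the right tool for Lemma~\ref{lemB}, where the sign $(-1)^{|\boldsymbol{e}|}$ forces it).

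Second, the part you defer as ``bookkeeping'' is the actual content of the lemma. After the hook evaluations one obtains the bilinear sum over all $i,j\ge 2$ with $i+j\ge k_2+2$ of $(-1)^{i}\binom{n_1}{i-1}\tilde{\mathfrak{Z}}_\mathcal{F}(n_1)\,(-1)^{j}\binom{n_3}{j-1}\tilde{\mathfrak{Z}}_\mathcal{F}(n_3)$; to reach the stated formula one must split this as the unrestricted sum over $i,j\ge2$ minus the range $i+j\le k_2+1$, and then prove that the unrestricted sum equals $F_{k_1,1}(X)F_{k_3,1}(X)$. That is a genuine computation (Pascal's rule, telescoping of the alternating binomial sums, and the parity property $(-1)^{n+1}\mathfrak{Z}_\mathcal{F}(n)=\mathfrak{Z}_\mathcal{F}(n)$), and your sketch neither performs it nor identifies the identity it rests on. As it stands, the proposal names the shape of the answer but does not supply a proof.
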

\begin{proof}
 We prove this lemma only for $k_2\ge2$. 
 The case $k_2=1$ can be proved similarly. 
 Put $\boldsymbol{e}=(e_1,e_2,e_3)$.
 By Theorem \ref{dualFS}, we have
 \begin{align*}
  &\sum_{\boldsymbol{e}} \tilde\zeta_\mathcal{F}^\star ((k_1,k_2,k_3) \oplus \boldsymbol{e}) \\
  &=-\sum_{\boldsymbol{e}} \tilde\zeta_\mathcal{F}^\star (((k_1,k_2,k_3) \oplus \boldsymbol{e})^\vee) \\
  &=-\sum_{\boldsymbol{e}} \tilde\zeta_\mathcal{F}^\star (\{1\}^{k_1+e_1-1},2,\{1\}^{k_2+e_2-2},2,\{1\}^{k_3+e_3-1}).
 \end{align*}
 By Propositions \ref{symsumF}, \ref{antipode}, and \ref{112}, we have
 \begin{align*}
  &-\sum_{\boldsymbol{e}} 
   \tilde\zeta_\mathcal{F}^\star (\{1\}^{k_1+e_1-1},2,\{1\}^{k_2+e_2-2},2,\{1\}^{k_3+e_3-1}) \\
  &=\sum_{\boldsymbol{e}} 
   \Biggl( 
    \sum_{ \substack{ i+j=k_2+e_2+2 \\ i,j\ge2 } }
    (-1)^{k_3+e_3+j} 
    \tilde\zeta_\mathcal{F}^\star (\{1\}^{k_1+e_1-1},2,\{1\}^{i-2}) 
    \tilde{\zeta}_{\mathcal{F}} (\{1\}^{k_3+e_3-1},2,\{1\}^{j-2}) \\
   &\qquad\quad +(-1)^{k_1+k_2+k_3+e_1+e_2+e_3} 
    \tilde{\zeta}_{\mathcal{F}} (\{1\}^{k_3+e_3-1},2,\{1\}^{k_2+e_2-2},2,\{1\}^{k_1+e_1-1}) 
   \Biggr) \\
  &=\sum_{\boldsymbol{e}} 
    \sum_{ \substack{ i+j=k_2+e_2+2 \\ i,j\ge2 } }
     (-1)^{k_1+e_1+j} \binom{k_1+e_1+i-1}{k_1+e_1} 
     \tilde{\mathfrak{Z}}_{\mathcal{F}}(k_1+e_1+i-1) \\
    &\qquad\qquad\qquad\qquad\qquad\quad\,\,
     \times \binom{k_3+e_3+j-1}{k_3+e_3} \tilde{\mathfrak{Z}}_{\mathcal{F}}(k_3+e_3+j-1) \\
    &\quad +(-1)^{k_1+k_2+k_3} \sum_{m\ge0} 
     \tilde{\zeta}_{\mathcal{F}} (p(P_m(k_3,k_2,k_1))) \\
  &=\sum_{ \substack{ i+j\ge k_2+2 \\ i,j\ge2 } }
   \sum_{\substack{ n_1\ge k_1+i-1 \\ n_3\ge k_3+j-1 }} 
   (-1)^{n_1+i+j+1} 
   \binom{n_1}{i-1} \tilde{\mathfrak{Z}}_{\mathcal{F}}(n_1) 
   \times \binom{n_3}{j-1} \tilde{\mathfrak{Z}}_{\mathcal{F}}(n_3) \\
  &\quad +(-1)^{k_1+k_2+k_3} \sum_{m\ge0} 
   \tilde{\zeta}_{\mathcal{F}} (p(P_m(k_3,k_2,k_1))).
 \end{align*}
 Since $(-1)^{a+1} \mathfrak{Z}_\mathcal{F} (a)=\mathfrak{Z}_\mathcal{F} (a)$, we have
 \begin{align*} 
  \begin{split}
  &\sum_{\boldsymbol{e}} \tilde\zeta_\mathcal{F}^{\star} ((k_1,k_2,k_3) \oplus \boldsymbol{e}) \\
  &=\sum_{ \substack{ i+j\ge k_2+2 \\ i,j\ge2 } }
   \sum_{\substack{ n_1\ge k_1+i-1 \\ n_3\ge k_3+j -1}} 
   (-1)^{i} \binom{n_1}{i-1}\tilde{\mathfrak{Z}}_{\mathcal{F}}(n_1)
   \times (-1)^{j} 
   \binom{n_3}{j-1}\tilde{\mathfrak{Z}}_{\mathcal{F}}(n_3) \\
  &\quad +(-1)^{k_1+k_2+k_3} \sum_{m\ge0} 
   \tilde{\zeta}_{\mathcal{F}} (p(P_m(k_3,k_2,k_1))) \\
  &=\Biggl( \sum_{i,j\ge2}-\sum_{\substack{ i+j\le k_2+1 \\ i,j\ge2 }} \Biggr)
   \sum_{\substack{ n_1\ge k_1+i-1 \\ n_3\ge k_3+j -1}} 
   (-1)^{i} \binom{n_1}{i-1}\tilde{\mathfrak{Z}}_{\mathcal{F}}(n_1)
   \times 
   (-1)^{j} \binom{n_3}{j-1}\tilde{\mathfrak{Z}}_{\mathcal{F}}(n_3) \\
  &\quad +(-1)^{k_1+k_2+k_3} \sum_{m\ge0} 
   \tilde{\zeta}_{\mathcal{F}} (p(P_m(k_3,k_2,k_1))).
  \end{split}
 \end{align*}
 Here, we note that
 \begin{align*} 
  &\sum_{ i,j\ge2 }
   \sum_{\substack{ n_1\ge k_1+i-1 \\ n_3\ge k_3+j -1}} 
   (-1)^{i} \binom{n_1}{i-1}\tilde{\mathfrak{Z}}_{\mathcal{F}}(n_1)
   \times (-1)^{j} \binom{n_3}{j-1}\tilde{\mathfrak{Z}}_{\mathcal{F}}(n_3) \\
  &=\sum_{\substack{n_1 \ge k_1+1 \\ n_3 \ge k_3+1 }}
   \sum_{\substack{ 2\le i \le n_1-k_1+1 \\ 2\le j \le n_3-k_3+1 }}
   (-1)^{i} \binom{n_1}{i-1}\tilde{\mathfrak{Z}}_{\mathcal{F}}(n_1)
   \times (-1)^{j} \binom{n_3}{j-1}\tilde{\mathfrak{Z}}_{\mathcal{F}}(n_3) \\
  &=\sum_{\substack{n_1 \ge k_1+1 \\ n_3 \ge k_3+1 }}
   \sum_{\substack{ 2\le i \le n_1-k_1+1 \\ 2\le j \le n_3-k_3+1 }} 
   \left( (-1)^{i} \left( \binom{n_1-1}{i-1}+\binom{n_1-1}{i-2} \right) 
   \tilde{\mathfrak{Z}}_{\mathcal{F}}(n_1) \right) \\
   &\quad\qquad\qquad\qquad\,\,\,\,\,\,\, \times
   \left( (-1)^{j} \left( \binom{n_3-1}{j-1}+\binom{n_3-1}{j-2} \right) 
   \tilde{\mathfrak{Z}}_{\mathcal{F}}(n_3) \right) \\
  &=\sum_{\substack{n_1 \ge k_1+1 \\ n_3 \ge k_3+1 }}
  \left(
   (-1)^{k_1} \binom{n_1-1}{k_1-1}+1
  \right)
  \mathfrak{\tilde{Z}}_{\mathcal{F}}(n_1) 
  \times
  \left(
   (-1)^{k_3} \binom{n_3-1}{k_3-1}+1
  \right)
  \tilde{\mathfrak{Z}}_{\mathcal{F}}(n_3) \\
  &=F_{k_1,1}(X) F_{k_3,1}(X).
 \end{align*}
 Thus we get the result. 
\end{proof}

%%%%%%%%%%%%%%%%%%%%%%%%
\subsection{Calculation of 
 $\sum_{\boldsymbol{e}} (-1)^{\left|\boldsymbol{e}\right|} \tilde\zeta_\mathcal{F}^{\star} ((k_1,k_2,k_3)^\vee \oplus \boldsymbol{e})$}
For positive integers $l_1,l_2,l_3$, and a nonegative integer $m$, we define a polynomial $Q_{m}(l_{1},l_{2},l_{3})$ in $\mathfrak{H}$ by
\begin{align*}
 Q_{m}(l_{1},l_{2},l_{3}) 
 &=\sum_{ \substack{ e_1+e_2+e_3=m \\ e_1,e_2,e_3\ge0 }}
  \binom{l_{1}+e_1-1}{e_1} \binom{l_{2}+e_2-2}{e_2} \binom{l_{3}+e_3-1}{e_3} \\
  &\qquad\qquad\quad\,\,\,\, \times y^{l_{1}+e_1}xy^{l_{2}+e_2-1}xy^{l_{3}+e_3}. 
\end{align*}
Here, when $l_2=1$, we understand 
\begin{align*}
 \binom{l_2+e_2-2}{e_2}=\binom{e_2-1}{e_2}= 
 \begin{cases}
  1 \quad(e_2=0), \\ 
  0 \quad(e_2\ge1).
 \end{cases}
\end{align*}
\begin{lem} \label{lemB}
 For positive integers $k_1,k_2,k_3$, we have
 \begin{align*}
  &\sum_{\boldsymbol{e}} 
    (-1)^{\left|\boldsymbol{e}\right|} \zeta_\mathcal{F}^{\star} ((k_1,k_2,k_3)^{\vee}\oplus\boldsymbol{e}) \\
  &=-\sum_{\substack{ i+j=k_2+2 \\ i,j \ge2 }} F_{k_1,i-1}(X) F_{k_3,j-1}(X)
   -(-1)^{k_1+k_2+k_3} \sum_{m\ge0} 
   \tilde{\zeta}_{\mathcal{F}} (p(Q_m(k_3,k_2,k_1))). 
 \end{align*} 
%  When $k_2=1$ in the above equality, we understand the right-hand side as 
% \[
% -(-1)^{k_1+k_2+k_3} \sum_{m\ge0} 
%   \tilde{\zeta}_{\mathcal{F}} (p(Q_m(k_3,k_2,k_1))).
% \]
\end{lem}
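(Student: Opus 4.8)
The plan is to argue along the lines of the proof of Lemma~\ref{lemA}, using that when $k_2\ge2$ one has $(k_1,k_2,k_3)^{\vee}=(\{1\}^{k_1-1},2,\{1\}^{k_2-2},2,\{1\}^{k_3-1})$, so that $(k_1,k_2,k_3)^{\vee}\oplus\boldsymbol{e}$ runs, as $\boldsymbol{e}$ varies over sequences of depth $r:=k_1+k_2+k_3-2$, exactly over all indices of depth $r$ whose components at positions $k_1$ and $k_1+k_2-1$ are $\ge2$. To $\tilde\zeta_\mathcal{F}^{\star}$ of each such index I would apply the antipode relation (Proposition~\ref{antipode}); summing over $\boldsymbol{e}$ and using the reversal formula (Proposition~\ref{revF}) to absorb the factor $(-1)^{|\boldsymbol{e}|}$, the inner sums factor, for each cut position $s$, as a product $A_s B_s$ in which $A_s$ is a sign-twisted sum of $\tilde\zeta_\mathcal{F}^{\star}$-values over the first $s$ components and $B_s$ a plain sum of $\tilde\zeta_\mathcal{F}$-values over the last $r-s$ components, the two ``$\ge2$'' constraints attaching to whichever block contains them.

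By Proposition~\ref{symsumF}, $A_s=0$ when the cut lies strictly inside the initial $\{1\}^{k_1-1}$ run, and $B_s=0$ as soon as the cut has passed the second $2$; so only $s=0$ and the cuts $k_1\le s\le k_1+k_2-2$ survive. For the surviving ``middle'' cuts, $A_s$ and $B_s$ each range over a block of components exactly one of which is required to be $\ge2$, which is precisely the shape handled by the sum formula (Theorem~\ref{sumF})---this is where the argument genuinely diverges from that of Lemma~\ref{lemA}, where the two $2$'s were untouched and Proposition~\ref{112} sufficed---and after invoking it, together with $\mathfrak{Z}_\mathcal{F}(2n)=0$ (so $F_{k,i}(-X)=-F_{k,i}(X)$) and the identity $F_{k,i}=-F_{i,k}$, these contributions collapse after reindexing into $-\sum_{i+j=k_2+2,\,i,j\ge2}F_{k_1,i-1}(X)F_{k_3,j-1}(X)$, the first term of the claimed formula.

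The cut $s=0$ contributes $-\sum_{\boldsymbol{n}}\tilde\zeta_\mathcal{F}(\boldsymbol{n})$, where $\boldsymbol{n}$ runs over the depth-$r$ indices with $n_{k_1},n_{k_1+k_2-1}\ge2$, i.e.\ over $(k_1,k_2,k_3)^{\vee}\oplus\boldsymbol{f}$. Reversing each index (Proposition~\ref{revF}) turns this into $-(-1)^{k_1+k_2+k_3}\sum_{\boldsymbol{e}}(-1)^{|\boldsymbol{e}|}\tilde\zeta_\mathcal{F}((k_3,k_2,k_1)^{\vee}\oplus\boldsymbol{e})$; Oyama's theorem (Theorem~\ref{ohnoF}) applied to the index $(k_3,k_2,k_1)^{\vee}$, whose Hoffman dual is $(k_3,k_2,k_1)$, rewrites this as $-(-1)^{k_1+k_2+k_3}\sum_{e_1,e_2,e_3\ge0}(-1)^{e_1+e_2+e_3}\tilde\zeta_\mathcal{F}\bigl(((k_3,k_2,k_1)\oplus(e_1,e_2,e_3))^{\vee}\bigr)$. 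Since $((k_3,k_2,k_1)\oplus(e_1,e_2,e_3))^{\vee}=(\{1\}^{k_3+e_1-1},2,\{1\}^{k_2+e_2-2},2,\{1\}^{k_1+e_3-1})=p\bigl(y^{k_3+e_1}xy^{k_2+e_2-1}xy^{k_1+e_3}\bigr)$, this equals $-(-1)^{k_1+k_2+k_3}\sum_{m\ge0}\tilde\zeta_\mathcal{F}\bigl(p(P_m(k_3,k_2,k_1))\bigr)$, and Lemma~\ref{PQ2} replaces $P_m$ by $Q_m$, giving the second term. The case $k_2=1$ runs the same way with $(k_1,1,k_3)^{\vee}=(\{1\}^{k_1-1},3,\{1\}^{k_3-1})$: there are no middle cuts, so the $F$-sum is empty, and the convention $\binom{e_2-1}{e_2}$ in $Q_m$ is precisely what makes the depth-three dual $((k_3,1,k_1)\oplus\boldsymbol{e})^{\vee}$ bookkeeping go through.

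The main obstacle is this last step. The antipode and Hoffman duality used for the other terms live on the star side, but the leftover $s=0$ term is an honest sum of non-star values $\zeta_\mathcal{F}$, which admit no elementary Hoffman duality; one must cross back via Oyama's theorem, and then pin down the depth-three Hoffman dual and the word $p(P_m(\cdot))$ precisely---including the degenerate cases where some $k_i=1$, where $e_2=0$ produces a $3$ rather than two $2$'s---so that the binomial weights built into $Q_m$ come out correctly.
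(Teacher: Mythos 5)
Your proof is correct, but it diverges from the paper's at the decisive step, namely where $Q_m$ comes from. The paper first invokes Theorem \ref{ohnoFS} to rewrite the alternating Ohno sum as $\sum_{m\ge0}(-1)^m\tilde\zeta_\mathcal{F}^\star((k_1,k_2,k_3)^\vee\sha(\{1\}^m))$, applies Propositions \ref{symsumF} and \ref{antipode} to these shuffled indices, converts the surviving product terms via Proposition \ref{112}, Corollary \ref{112no2}, and Theorems \ref{sumF} and \ref{ohnoFS} into $\sum_{i+j=k_2+2,\,i,j\ge2}F_{k_1,i-1}(X)F_{k_3,j-1}(X)$, and then simply observes that the leftover term satisfies $(\{1\}^{k_3-1},2,\{1\}^{k_2-2},2,\{1\}^{k_1-1})\sha(\{1\}^m)=p(Q_m(k_3,k_2,k_1))$, the binomial coefficients in $Q_m$ being exactly the shuffle multiplicities; thus neither Oyama's theorem nor Lemma \ref{PQ2} enters the paper's proof of this lemma. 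You instead work with the Ohno sum directly (antipode plus Proposition \ref{symsumF} to kill every cut except $s=0$ and the middle ones, Theorem \ref{sumF} plus the vanishing of $\mathfrak{Z}_\mathcal{F}$ in even weight for the middle cuts, which indeed collapse to $-\sum_{i+j=k_2+2,\,i,j\ge2}F_{k_1,i-1}(X)F_{k_3,j-1}(X)$), and you treat the $s=0$ cut by reversal and Theorem \ref{ohnoF}, landing on $-(-1)^{k_1+k_2+k_3}\sum_{m\ge0}\tilde\zeta_\mathcal{F}(p(P_m(k_3,k_2,k_1)))$ before converting $P_m$ into $Q_m$ by Lemma \ref{PQ2}; this is legitimate and non-circular, since Lemma \ref{PQ2} rests only on Proposition \ref{PQ} and Lemma \ref{shF2}. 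As for what each route buys: yours mirrors the proof of Lemma \ref{lemA}, and had the lemma been stated with $P_m$ in place of $Q_m$, it would let the $P$-terms of Lemmas \ref{lemA} and \ref{lemB} cancel in Theorem \ref{main} without ever invoking Lemma \ref{PQ2}; the paper's route trades Oyama's theorem for the shuffle description of Theorem \ref{ohnoFS}, which makes the binomial weights of $Q_m$ appear with no extra work. Two cosmetic slips that do not affect the outcome: at a middle cut both factors, not only the prefix, carry the sign induced by $(-1)^{\left|\boldsymbol{e}\right|}$, and the suffix sum already vanishes when the cut sits exactly at the second $2$ (provided it is not the final position).
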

\begin{proof}
 We prove this lemma only for $k_2\ge2$. 
 The case $k_2=1$ can be proved similarly. 
 By Theorem \ref{ohnoFS}, we have
 \begin{align*}
  &\sum_{\boldsymbol{e}} 
   (-1)^{\left|\boldsymbol{e}\right|} \tilde\zeta_\mathcal{F}^\star ((k_1,k_2,k_3)^{\vee}\oplus\boldsymbol{e}) \\
  &=\sum_{m\ge0}  
   (-1)^m \tilde\zeta_\mathcal{F}^\star ((k_1,k_2,k_3)^\vee \sha (\{1\}^m)) \\
  &=\sum_{m\ge0}  
   (-1)^m \tilde\zeta_\mathcal{F}^\star ((\{1\}^{k_1-1},2,\{1\}^{k_2-2},2,\{1\}^{k_3-1}) \sha (\{1\}^m)). 
 \end{align*}
 By Propositions \ref{symsumF} and \ref{antipode}, we have
 \begin{align*} \label{eq21}
  \begin{split} 
  &\sum_{m\ge0}  
   (-1)^m \tilde\zeta_\mathcal{F}^\star ((\{1\}^{k_1-1},2,\{1\}^{k_2-2},2,\{1\}^{k_3-1}) \sha (\{1\}^m)) \\
  &=-\sum_{\substack{ m_1\ge0 \\ m_3\ge0 }} 
    \sum_{\substack{ i+j=k_2+2 \\ i,j\ge2 }}
    (-1)^{k_3+m_1+j} 
    \tilde\zeta_\mathcal{F}^\star ((\{1\}^{k_1-1},2,\{1\}^{i-2}) \sha (\{1\}^{m_1})) \\
     &\qquad\qquad\qquad\qquad\qquad\quad\,\,\,\,
      \times \tilde{\zeta}_{\mathcal{F}} ((\{1\}^{k_3-1},2,\{1\}^{j-2}) \sha (\{1\}^{m_3})) \\
    &\quad-(-1)^{k_1+k_2+k_3}
     \sum_{m\ge0} 
     \tilde{\zeta}_{\mathcal{F}} ((\{1\}^{k_3-1},2,\{1\}^{k_2-2},2,\{1\}^{k_1-1}) \sha (\{1\}^m)). 
  \end{split}
 \end{align*}
 By Proposition \ref{112}, Corollary \ref{112no2}, and Theorems \ref{sumF} and \ref{ohnoFS}, we have
 \begin{align*} 
  &\sum_{\substack{ m_1\ge0 \\ m_3\ge0 }} 
    \sum_{\substack{ i+j=k_2+2 \\ i,j\ge2 }}
    (-1)^{k_3+m_1+j} 
    \tilde\zeta_\mathcal{F}^\star ((\{1\}^{k_1-1},2,\{1\}^{i-2}) \sha (\{1\}^{m_1})) \\
     &\qquad\qquad\qquad\qquad\quad\,\,\,
      \times \tilde{\zeta}_{\mathcal{F}} ((\{1\}^{k_3-1},2,\{1\}^{j-2}) \sha (\{1\}^{m_3})) \\
  &=(-1)^{k_1+k_2+k_3} \sum_{\substack{ m_1\ge0 \\ m_3\ge0 }} 
    \sum_{\substack{ i+j=k_2+2 \\ i,j\ge2 }} 
    \tilde\zeta_\mathcal{F}^\star ((\{1\}^{k_1-1},2,\{1\}^{i-2}) \sha (\{1\}^{m_1})) \\
     &\qquad\qquad\qquad\qquad\qquad\quad \times 
      \tilde{\zeta}_{\mathcal{F}}^\star ((\{1\}^{k_3-1},2,\{1\}^{j-2}) \sha (\{1\}^{m_3})) \\
  &=(-1)^{k_1+k_2+k_3} \sum_{\substack{ m_1\ge0 \\ m_3\ge0 }} 
      \sum_{\substack{ i+j=k_2+2 \\ i,j\ge2 }}
      \sum_{\left|\boldsymbol{e}_1\right|=m_1}
      \tilde\zeta_\mathcal{F}^\star ((\{1\}^{k_1-1},2,\{1\}^{i-2}) \oplus \boldsymbol{e}_1) \\
     &\qquad\qquad\qquad\qquad\qquad\quad\,\, \times 
      \sum_{\left|\boldsymbol{e}_3\right|=m_3}
      \tilde\zeta_\mathcal{F}^\star ((\{1\}^{k_3-1},2,\{1\}^{j-2}) \oplus \boldsymbol{e}_3) \\
   &=\sum_{\substack{ i+j=k_2+2 \\ i,j \ge2 }} 
     F_{k_1,i-1}(X) F_{k_3,j-1}(X).
 \end{align*}
 Since
 \[
  (\{1\}^{k_3-1},2,\{1\}^{k_2-2},2,\{1\}^{k_1-1}) \sha (\{1\}^m)
  =p(Q_m(k_3,k_2,k_1)),
 \]
 we have the desired result. 
\end{proof}

%%%%%%%%%%%%%%%%%%%%%%%%
\subsection{The equality $\tilde\zeta_\mathcal{F}(p(P_m(l_1,l_2,l_3)-Q_m(l_1,l_2,l_3)))=0$}
Recall $\mathfrak{H}=\mathbb{Q} \left\langle x,y \right\rangle$.
We define the shuffle product as the $\mathbb{Q}$-bilinear product $\sh:\mathfrak{H}\times\mathfrak{H}\to\mathfrak{H}$ given by  
\begin{align*}
 1\sh w&=w\sh 1=w, \\
 wu\sh w'u'&=(w\sh w'u')u+(wu\sh w')u',
\end{align*}
where $w,w'\in\mathfrak{H}$ and $u,u'\in\{x,y\}$. 
For $u_1,\dots,u_r\in\{x,y\}$, let $\overline{u_1\cdots u_r}=u_r\cdots u_1$. 
We denote by $\left|w\right|$ the degree of a word $w$, e.g., $\left|yx\right|=2$.
\begin{thm}[Kaneko-Zagier \cite{KZ19}] \label{shF}
 For words $w\in y\mathfrak{H}$ and $w'\in\mathbb{Q}\oplus y\mathfrak{H}$, we have
 \begin{align*}
  \zeta_\mathcal{F} (p((w\sh w')y))
  =(-1)^{\left|w'\right|} \zeta_\mathcal{F} (p(wy\overline{w'})).
 \end{align*}
\end{thm}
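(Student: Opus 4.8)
This is a theorem of Kaneko and Zagier; the plan is to deduce it from the shuffle product formula for finite and symmetric multiple zeta values together with Proposition \ref{revF}. First, by $\mathbb{Q}$-bilinearity of $\sh$ and $\mathbb{Q}$-linearity of $p$, $\zeta_\mathcal{F}$, and word reversal, it suffices to treat single words $w=yu_2\cdots u_m$ and $w'=u_1'\cdots u_n'$; if $w'\in\mathbb{Q}$ the claim is trivial because then $w\sh w'=w'w$, $\overline{w'}=w'$ and $(-1)^{|w'|}=1$, so we may assume $u_1'=y$. Writing $\mathfrak{w}(\boldsymbol k)=x^{k_1-1}y\cdots x^{k_r-1}y$, so that $p(y\,\mathfrak{w}(\boldsymbol k))=\boldsymbol k$, one checks directly that word reversal carries $y\,\mathfrak{w}(\boldsymbol k)$ to $y\,\mathfrak{w}(k_r,\dots,k_1)$; hence Proposition \ref{revF} says $\zeta_\mathcal{F}(p(W))=(-1)^{|W|-1}\zeta_\mathcal{F}(p(\overline W))$ for $W\in y\mathfrak{H}y$. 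Applying this to both sides of Theorem \ref{shF} reduces it to the equivalent identity $(-1)^{|w'|}\zeta_\mathcal{F}(p(y(\overline w\sh\overline{w'})))=\zeta_\mathcal{F}(p(w'y\overline w))$, in which $\overline w\sh\overline{w'}$ and $\overline w$ involve only words ending in $y$, i.e.\ standard words of genuine indices, and the right-hand side is a single such word.

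The heart of the matter is then the shuffle product formula for $\zeta_\mathcal{F}$. For $\mathcal{F}=\mathcal{S}$ one obtains it by substituting the shuffle product formula of Ihara--Kaneko--Zagier \cite{IKZ06} for real multiple zeta values into the defining alternating sum $\zeta_\mathcal{S}(k_1,\dots,k_r)=\sum_{i}(-1)^{k_{i+1}+\dots+k_r}\zeta(k_1,\dots,k_i)\zeta(k_r,\dots,k_{i+1})$; the regularization ambiguities coming from the non-admissible words cancel modulo $\zeta(2)$. For $\mathcal{F}=\mathcal{A}$ one works with the defining partial sums $\bigl(\sum_{0<m_1<\dots<m_r<p}m_1^{-k_1}\cdots m_r^{-k_r}\bmod p\bigr)_p$, using the substitution $m_i\mapsto p-m_i$ (which simultaneously yields Proposition \ref{revF} in this case) together with the elementary congruences for the power sums $\sum_{0<m<p}m^{-k}\bmod p$ to turn the iterated partial sums into the required shuffle-type combinations. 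In both settings the normalization $\zeta_\mathcal{F}(1)=0$ of Proposition \ref{dep1} is used to dispose of the non-admissible words that appear.

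With the shuffle product formula available, the final step is combinatorial: one expands $\overline w\sh\overline{w'}$ — for instance by induction on $n=|w'|$, peeling off one letter at a time and applying the shuffle recursion — applies the shuffle product formula and Proposition \ref{revF} to each resulting piece, and checks that everything collapses to the single term $\zeta_\mathcal{F}(p(w'y\overline w))$, the sign $(-1)^{|w'|}$ being accounted for as the sign of the antipode $S(w')=(-1)^{|w'|}\overline{w'}$ of the shuffle Hopf algebra. I expect the main obstacle to be exactly this reconciliation of the shuffle product $w\sh w'$ appearing on one side of Theorem \ref{shF} with the plain concatenation $wy\overline{w'}$ on the other: it amounts to establishing the shuffle relations for $\zeta_\mathcal{F}$ in a usable form, which for $\mathcal{A}$ is a genuinely arithmetic statement modulo $p$ and for $\mathcal{S}$ demands careful bookkeeping so that the regularization contributions cancel modulo $\zeta(2)$.
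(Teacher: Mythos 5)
First, a point of reference: the paper does not prove this statement at all --- Theorem \ref{shF} is imported as a black box from Kaneko--Zagier \cite{KZ19} --- so there is no internal proof to compare yours with; your attempt has to stand on its own, and as it stands it is not a proof. The preliminary reduction is correct but cosmetic: rewriting Proposition \ref{revF} as $\zeta_\mathcal{F}(p(W))=(-1)^{|W|-1}\zeta_\mathcal{F}(p(\overline{W}))$ for $W\in y\mathfrak{H}y$ and applying it to both sides only restates the identity in a mirrored form; all of the content still sits in the step you defer to, and that is exactly where the gap is.

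The gap is that you rest everything on ``the shuffle product formula for $\zeta_\mathcal{F}$'', but no such product formula is available: products of finite and symmetric multiple zeta values are governed by the harmonic (stuffle) relations, and the shuffle structure enters only through the linear relation that \emph{is} Theorem \ref{shF}. Concretely, in the paper's notation a putative product formula $\zeta_\mathcal{F}(p(w_1y))\zeta_\mathcal{F}(p(w_2y))=\zeta_\mathcal{F}(p((w_1\sh w_2)y))$ with $w_1=y$, $w_2=yx$ would read $\zeta_\mathcal{F}(1)\zeta_\mathcal{F}(2)=2\zeta_\mathcal{F}(1,2)+\zeta_\mathcal{F}(2,1)$, i.e.\ $0=3\mathfrak{Z}_\mathcal{F}(3)$ by Propositions \ref{dep1} and \ref{dep2}, which you certainly may not assume (for $\mathcal{S}$ it would say $\zeta(3)\in\zeta(2)\mathcal{Z}$). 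So in the product-formula reading your key input is false, and in the charitable reading (``shuffle relations for $\zeta_\mathcal{F}$ in a usable form'') it is precisely the theorem to be proven, making the plan circular. The two routes you sketch do not fill this in: for $\mathcal{S}$, the paper's $\zeta_\mathcal{S}^{(\star)}$ is built from \emph{harmonic}-regularized values, so before any use of \cite{IKZ06} you need the nontrivial comparison with shuffle-regularized symmetric values modulo $\zeta(2)$, and then an actual antipode/path-reversal computation collapsing $\zeta_\mathcal{S}(p((w\sh w')y))$ to a single signed term --- none of which is carried out, and the index-level splitting and index reversal in the defining alternating sum do not interact with letter-level shuffles as casually as your sketch suggests; for $\mathcal{A}$, the substitution $m\mapsto p-m$ yields only Proposition \ref{revF}, and congruences for power sums give depth-one vanishing, not shuffle combinatorics --- the real work is to show that the resulting double sum constrained by $m+n<p$ equals the shuffle expansion, which needs a separate inductive or partial-fraction argument you have not supplied. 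Since you yourself flag this ``reconciliation'' as the main obstacle, the proposal reduces Theorem \ref{shF} to an unproved (and, as a product formula, false) assertion rather than proving it.
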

\begin{lem} \label{shF2}
 For $w\in y\mathfrak{H}$ and $w'\in\mathbb{Q}\oplus y\mathfrak{H}$, we have
 \[
  \zeta_\mathcal{F} (p(wy \sh w'y))=0.
 \]
\end{lem}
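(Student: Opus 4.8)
The plan is to rewrite $wy\sh w'y$ so that Theorem \ref{shF} becomes directly applicable. Viewing $wy$ and $w'y$ as words whose last letter is $y$ and applying the recursive definition of $\sh$ once, we get
\[
 wy\sh w'y=(w\sh w'y)y+(wy\sh w')y,
\]
hence
\[
 \zeta_\mathcal{F}(p(wy\sh w'y))=\zeta_\mathcal{F}(p((w\sh w'y)y))+\zeta_\mathcal{F}(p((wy\sh w')y)).
\]
Each summand now has the shape $\zeta_\mathcal{F}(p((v\sh v')y))$ to which Theorem \ref{shF} applies: for the first, $v=w\in y\mathfrak{H}$ and $v'=w'y$, which lies in $y\mathfrak{H}\subseteq\mathbb{Q}\oplus y\mathfrak{H}$ regardless of whether $w'$ is a scalar or a word starting with $y$; for the second, $v=wy\in y\mathfrak{H}$ and $v'=w'\in\mathbb{Q}\oplus y\mathfrak{H}$.

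Next I would invoke Theorem \ref{shF} termwise. Since $\left|w'y\right|=\left|w'\right|+1$ and $\overline{w'y}=y\overline{w'}$, the first summand equals $(-1)^{\left|w'\right|+1}\zeta_\mathcal{F}(p(wyy\overline{w'}))$, while the second equals $(-1)^{\left|w'\right|}\zeta_\mathcal{F}(p((wy)y\overline{w'}))=(-1)^{\left|w'\right|}\zeta_\mathcal{F}(p(wyy\overline{w'}))$. These two contributions are negatives of each other, so their sum vanishes, which is exactly the assertion.

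The only things needing attention are bookkeeping: verifying at each use of Theorem \ref{shF} that the arguments lie in the required subspaces --- in particular that $wyy\overline{w'}\in y\mathfrak{H}y$ so that $p$ is defined on it, which holds because $w$ starts with $y$ and $\overline{w'}$ is either a scalar or a word ending in $y$ --- and correctly propagating the sign $(-1)^{\left|w'\right|}$ and the reversal operation through the computation. There is no real analytic or combinatorial difficulty here; the lemma is a purely formal consequence of the shuffle regularization identity of Theorem \ref{shF} together with the Leibniz-type recursion defining $\sh$.
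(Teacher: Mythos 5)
Your proof is correct and follows essentially the same route as the paper: one application of the recursive definition of $\sh$ to peel off the final $y$'s, then Theorem \ref{shF} applied to each of the two resulting terms, which both reduce to $\pm\zeta_\mathcal{F}(p(wy^2\overline{w'}))$ with opposite signs. The sign and reversal bookkeeping in your argument matches the paper's computation exactly.
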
 
\begin{proof}
 We may assume that $w$ and $w'$ are words. 
 By Theorem \ref{shF}, we have  
 \begin{align*}
  \zeta_\mathcal{F} (p(wy\sh w'y)) 
  &=\zeta_\mathcal{F} (p( (w\sh w'y)y + (wy\sh w')y) ) \\
  &=(-1)^{\left|w'y\right|} \zeta_\mathcal{F} (p( wy^2\overline{w'} ) ) + (-1)^{\left|w'\right|} \zeta_\mathcal{F} (p( wy^2\overline{w'} ) ) \\
  &=0. \qedhere
 \end{align*} 
\end{proof}
\begin{prop} \label{PQ}
 For positive integers $l_{1},l_{2},l_{3}$ and a nonnegative integer $m$, we have
 \[ 
  Q_{m}(l_{1},l_{2},l_{3}) 
  =\sum_{i=0}^{m} \sum_{ \substack{ e_1+e_2+e_3=i \\ e_1,e_2,e_3\ge0 }} (-1)^i y^{l_1+e_1}xy^{l_2+e_2-1}xy^{l_3+e_3} \sh y^{m-i}.
 \]
\end{prop}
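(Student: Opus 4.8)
The plan is to verify the claimed identity by expanding the shuffle product $y^{l_1+e_1}xy^{l_2+e_2-1}xy^{l_3+e_3}\sh y^{m-i}$ on the right-hand side and collecting terms. First I would recall the elementary fact that for any word $w$ of the form $y^{a_0}xy^{a_1}x\cdots xy^{a_k}$ and any nonnegative integer $n$, the shuffle $w\sh y^{n}$ equals $\sum\prod_{s=0}^{k}\binom{a_s+b_s}{b_s}\,y^{a_0+b_0}xy^{a_1+b_1}x\cdots xy^{a_k+b_k}$, where the sum runs over all $(b_0,\dots,b_k)$ with $b_s\ge0$ and $b_0+\cdots+b_k=n$; this is just the statement that shuffling $n$ copies of $y$ into $w$ amounts to distributing the $y$'s among the $k+1$ blocks, and for each distribution the number of shuffles landing $b_s$ extra $y$'s in block $s$ is the multinomial coefficient $\binom{a_s+b_s}{b_s}$ within that block (each block shuffle of $y^{a_s}$ with $y^{b_s}$ contributing $\binom{a_s+b_s}{b_s}$, with the inter-block ordering being forced).

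Applying this with $k=2$, $a_0=l_1+e_1$, $a_1=l_2+e_2-1$, $a_2=l_3+e_3$ and $n=m-i$, the right-hand side of the claimed identity becomes
\begin{align*}
 \sum_{i=0}^{m}(-1)^i \sum_{\substack{e_1+e_2+e_3=i\\ e_1,e_2,e_3\ge0}}\ \sum_{\substack{b_1+b_2+b_3=m-i\\ b_1,b_2,b_3\ge0}}
 \binom{l_1+e_1+b_1}{b_1}\binom{l_2+e_2+b_2-1}{b_2}\binom{l_3+e_3+b_3}{b_3}\,y^{l_1+e_1+b_1}xy^{l_2+e_2+b_2-1}xy^{l_3+e_3+b_3}.
\end{align*}
Next I would change variables, setting $f_s=e_s+b_s$ for $s=1,2,3$, so that $f_1+f_2+f_3=m$, and collect the coefficient of the word $y^{l_1+f_1}xy^{l_2+f_2-1}xy^{l_3+f_3}$. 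That coefficient is
\[
 \sum_{\substack{e_1\le f_1,\,e_2\le f_2,\,e_3\le f_3}}(-1)^{e_1+e_2+e_3}\binom{l_1+f_1}{f_1-e_1}\binom{l_2+f_2-1}{f_2-e_2}\binom{l_3+f_3}{f_3-e_3}.
\]
This factors as a product of three single-variable sums, and each one is a standard Vandermonde/alternating binomial evaluation: $\sum_{e=0}^{f}(-1)^{e}\binom{N}{f-e}=\binom{N-1}{f}$ for $N\ge1$, and the analogue $\sum_{e=0}^{f}(-1)^e\binom{l_2+f-1}{f-e}=\binom{l_2+f-2}{f}$ (with the convention $\binom{l_2+f-2}{f}=\binom{f-1}{f}$ when $l_2=1$, matching the convention stated before the lemma, which correctly yields $1$ for $f=0$ and $0$ for $f\ge1$). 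Hence the coefficient collapses to $\binom{l_1+f_1-1}{f_1}\binom{l_2+f_2-2}{f_2}\binom{l_3+f_3-1}{f_3}$, which is exactly the coefficient appearing in the definition of $Q_m(l_1,l_2,l_3)$.

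The computation is essentially routine once the bookkeeping is set up; the only place requiring care — and hence the main obstacle — is the degenerate case $l_2=1$, where $l_2+e_2-1$ can equal $-1$ when $e_2=0$ (interpreted via the stated convention that the relevant word is then $y^{l_1+f_1}x\cdot x\,y^{l_3+f_3}$, i.e.\ a bare $x^2$ with no $y$ between the two $x$'s) and where the middle binomial $\binom{l_2+e_2-2}{e_2}$ uses the special convention. I would check directly that in this case the shuffle-expansion argument still goes through verbatim with $a_1=0$, that the middle alternating sum still evaluates correctly to $\binom{f_2-1}{f_2}$, and that this agrees with the convention fixed before the lemma, so that no separate argument is needed.
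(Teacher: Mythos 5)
Your proposal is correct and takes essentially the same approach as the paper: both compare coefficients of each word $y^{l_1+f_1}xy^{l_2+f_2-1}xy^{l_3+f_3}$ after expanding the shuffle with $y^{m-i}$ by distributing the extra $y$'s over the three $y$-blocks, and both evaluate the resulting alternating binomial sums (the paper telescopes via Pascal's rule, which is the same identity you invoke). One cosmetic slip: when $l_2=1$ and $e_2=0$ the middle exponent $l_2+e_2-1$ equals $0$, not $-1$ --- it is only the binomial argument $l_2+e_2-2$ that becomes $-1$ --- but your treatment of that degenerate case via the stated convention is nonetheless correct.
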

\begin{proof}
 Fix nonnegative integers $a_1,a_2,a_3$ with $a_1+a_2+a_3=m$. 
 Then the coefficient of $y^{l_1+a_1}xy^{l_2+a_2-1}xy^{l_3+a_3}$ on the right-hand side is
 \begin{align*}
  &\sum_{j=0}^{a_1} (-1)^{a_1-j} \binom{l_1+a_1}{j} 
  \times \sum_{j=0}^{a_2} (-1)^{a_2-j} \binom{l_2+a_2-1}{j} 
  \times \sum_{j=0}^{a_3} (-1)^{a_3-j} \binom{l_3+a_3}{j} \\
  &=\sum_{j=0}^{a_1} (-1)^{a_1-j} \left( \binom{l_1+a_1-1}{j} +\binom{l_1+a_1-1}{j-1} \right) \\ 
   &\quad \times \sum_{j=0}^{a_2} (-1)^{a_2-j} \left( \binom{l_2+a_2-2}{j} +\binom{l_2+a_2-2}{j-1} \right) \\
   &\quad \times \sum_{j=0}^{a_3} (-1)^{a_3-j} \left( \binom{l_3+a_3-1}{j} +\binom{l_3+a_3-1}{j-1} \right) \\
  &=\binom{l_{1}+a_1-1}{a_1} \binom{l_{2}+a_2-2}{a_2} \binom{l_{3}+a_3-1}{a_3}.   
 \end{align*} 
 Here, we understand $\binom{n}{-1}=0$ for all integers $n$.
 This finishes the proof. 
\end{proof}
\begin{lem} \label{PQ2}
 For positive integers $l_{1},l_{2},l_{3}$ and a nonnegative integer $m$, we have
 \[ 
  \tilde\zeta_\mathcal{F} ( p(P_{m}(l_{1},l_{2},l_{3}) - Q_{m}(l_{1},l_{2},l_{3}) )) 
  =0.
 \]
\end{lem}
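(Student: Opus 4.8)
The plan is to read the identity straight off Proposition~\ref{PQ} and Lemma~\ref{shF2}; the one thing to notice is that Proposition~\ref{PQ} exhibits $P_m(l_1,l_2,l_3)$ as the top summand of the shuffle expansion of $Q_m(l_1,l_2,l_3)$. First I would apply Proposition~\ref{PQ} to write
\[
 Q_m(l_1,l_2,l_3)
 =\sum_{i=0}^{m}(-1)^{i}\sum_{\substack{e_1+e_2+e_3=i\\e_1,e_2,e_3\ge0}}
 \bigl(y^{l_1+e_1}xy^{l_2+e_2-1}xy^{l_3+e_3}\bigr)\sh y^{m-i}.
\]
The $i=m$ summand is a shuffle against $y^{0}=1$, hence equals $(-1)^{m}\sum_{e_1+e_2+e_3=m}y^{l_1+e_1}xy^{l_2+e_2-1}xy^{l_3+e_3}=P_m(l_1,l_2,l_3)$, so
\[
 P_m(l_1,l_2,l_3)-Q_m(l_1,l_2,l_3)
 =-\sum_{i=0}^{m-1}(-1)^{i}\sum_{\substack{e_1+e_2+e_3=i\\e_1,e_2,e_3\ge0}}
 \bigl(y^{l_1+e_1}xy^{l_2+e_2-1}xy^{l_3+e_3}\bigr)\sh y^{m-i}.
\]
It therefore suffices to prove $\zeta_\mathcal{F}\bigl(p\bigl((y^{l_1+e_1}xy^{l_2+e_2-1}xy^{l_3+e_3})\sh y^{m-i}\bigr)\bigr)=0$ whenever $m-i\ge1$.

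For such a term I would split the two factors so that Lemma~\ref{shF2} applies. Since $l_3+e_3\ge l_3\ge1$, write $y^{l_1+e_1}xy^{l_2+e_2-1}xy^{l_3+e_3}=wy$ with $w=y^{l_1+e_1}xy^{l_2+e_2-1}xy^{l_3+e_3-1}$; here $w\in y\mathfrak{H}$ because $l_1+e_1\ge l_1\ge1$. Since $m-i\ge1$, write $y^{m-i}=w'y$ with $w'=y^{m-i-1}$, which lies in $\mathbb{Q}\oplus y\mathfrak{H}$ (it is $1$ when $m-i=1$ and lies in $y\mathfrak{H}$ when $m-i\ge2$). Lemma~\ref{shF2} now gives $\zeta_\mathcal{F}(p(wy\sh w'y))=0$, as needed; this splitting also makes clear that every word appearing in $P_m-Q_m$ lies in $y\mathfrak{H}y$, so that $p$ is defined on it. No case distinction is required: the degenerate possibility $l_2=1$ (where $y^{l_2+e_2-1}$ may be empty, producing a consecutive $xx$) and the small cases $m=0$ and $m-i=1$ are all covered, and Proposition~\ref{PQ}, which already incorporates the convention for $\binom{l_2+e_2-2}{e_2}$, is used verbatim.

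It remains only to pass from $\zeta_\mathcal{F}$ to $\tilde\zeta_\mathcal{F}$. Every word in $P_m(l_1,l_2,l_3)-Q_m(l_1,l_2,l_3)$ is sent by $p$ to an index of weight $l_1+l_2+l_3+m$: the word $y^{l_1+e_1}xy^{l_2+e_2-1}xy^{l_3+e_3}$ maps to an index of weight $l_1+l_2+l_3+i$, and shuffling with $y^{m-i}$ inserts exactly $m-i$ additional letters $y$, each contributing $1$ to the weight. Hence $p(P_m-Q_m)$ is weight-homogeneous of weight $l_1+l_2+l_3+m$, and so $\tilde\zeta_\mathcal{F}(p(P_m-Q_m))=X^{l_1+l_2+l_3+m}\zeta_\mathcal{F}(p(P_m-Q_m))=0$ by the previous paragraph, which finishes the proof. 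I do not anticipate any genuine obstacle: the whole argument reduces to Proposition~\ref{PQ} and Lemma~\ref{shF2}, and the only delicate point is the purely notational bookkeeping in the word splittings $wy$ and $w'y$ and in checking their membership in $y\mathfrak{H}$ and $\mathbb{Q}\oplus y\mathfrak{H}$.
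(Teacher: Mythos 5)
Your proof is correct and follows essentially the same route as the paper: apply Proposition~\ref{PQ}, recognize the $i=m$ term as $P_m(l_1,l_2,l_3)$, and kill each remaining shuffle term $\bigl(y^{l_1+e_1}xy^{l_2+e_2-1}xy^{l_3+e_3}\bigr)\sh y^{m-i}$ with $m-i\ge1$ via Lemma~\ref{shF2}. The extra bookkeeping you supply (the splittings $wy$, $w'y$, membership in $y\mathfrak{H}y$, and the weight-homogeneity step passing from $\zeta_\mathcal{F}$ to $\tilde\zeta_\mathcal{F}$) is exactly what the paper leaves implicit.
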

\begin{proof}
 By Proposition \ref{PQ}, we have
 \[ 
  P_{m}(l_{1},l_{2},l_{3}) - Q_{m}(l_{1},l_{2},l_{3}) 
  =-\sum_{i=0}^{m-1} \sum_{ \substack{ e_1+e_2+e_3=i \\ e_1,e_2,e_3\ge0 }} (-1)^i y^{l_1+e_1}xy^{l_2+e_2-1}xy^{l_3+e_3} \sh y^{m-i}.
 \]
 Thus, by Lemma \ref{shF2}, we have the desired result.
\end{proof}

%%%%%%%%%%%%%%%%%%%%%%%%
\subsection{Proof of Theorem \ref{main}}
Now we prove our main theorem.     
\begin{proof}[Proof of Theorem \ref{main}]
 By Lemmas \ref{lemA}, \ref{lemB}, and \ref{PQ2} we have
 \begin{align*}
  &\mathit{O}_\mathcal{F} (k_1,k_2,k_3) \\
  &=F_{k_1,1}(X) F_{k_3,1}(X)
   -\sum_{\substack{i+j\le k_2+1 \\ i,j \ge2}}
   \sum_{\substack{ n_1 \ge k_1+i-1 \\ n_3 \ge k_3+j-1 }}
    (-1)^{i} \binom{n_1}{i-1} 
    \tilde{\mathfrak{Z}}_\mathcal{F}(n_1)
    \times
    (-1)^{j} \binom{n_3}{j-1}
    \tilde{\mathfrak{Z}}_\mathcal{F}(n_3) \\ 
  &\quad -\sum_{\substack{ i+j=k_2+2 \\ i,j \ge2 }} F_{k_1,i-1}(X) F_{k_3,j-1}(X).
 \end{align*}
For positive integers $k$ and $s$, let
\[
 U_{k,s}
 =\sum_{n \ge s}(-1)^{k} \binom{n-1}{k-1} \mathfrak{\tilde{Z}}_\mathcal{F}(n).  
\]
 Since $(-1)^{s+1} \mathfrak{Z}_\mathcal{F} (s)=\mathfrak{Z}_\mathcal{F} (s)$, we note that
 \begin{align*}
  U_{k,s}-U_{k,s-1} 
  &=(-1)^{k-1} \binom{s-2}{k-1} \mathfrak{\tilde{Z}}_\mathcal{F}(s-1) 
  =(-1)^{s-k-1} \binom{s-2}{s-k-1} \mathfrak{\tilde{Z}}_\mathcal{F}(s-1) \\
  &=U_{s-k,s}-U_{s-k,s-1}. 
 \end{align*}
  Then we have
 \begin{align*}
  &\sum_{\substack{ i+j\le k_2+1 \\ i,j \ge2 }}
   \sum_{\substack{ n_1 \ge k_1+i-1 \\ n_3 \ge k_3+j-1 }}
    (-1)^{i} \binom{n_1}{i-1} \tilde{\mathfrak{Z}}_\mathcal{F}(n_1)
    \times
    (-1)^{j} \binom{n_3}{j-1} \tilde{\mathfrak{Z}}_\mathcal{F}(n_3) \\
  &=\sum_{\substack{i+j\le k_2+1 \\ i,j \ge2}}
   \sum_{\substack{ n_1 \ge k_1+i-1 \\ n_3 \ge k_3+j-1 }}
    (-1)^{i-1} \left( \binom{n_1-1}{i-2} +\binom{n_1-1}{i-1}  \right) \tilde{\mathfrak{Z}}_\mathcal{F}(n_1) \\
    &\quad\qquad\qquad\quad\quad\,\, \times
    (-1)^{j-1} \left( \binom{n_3-1}{j-2} +\binom{n_3-1}{j-1} \right) \tilde{\mathfrak{Z}}_\mathcal{F}(n_3) \\
  &=\sum_{\substack{ i+j\le k_2+1 \\ i,j \ge2 }}
    (U_{i-1,k_1+i-1}-U_{i,k_1+i-1})
    (U_{j-1,k_3+j-1}-U_{j,k_3+j-1}) \\
  &=\sum_{\substack{i+j\le k_2+1 \\ i,j \ge2 }}
    (U_{i-1,k_1+i-1}-U_{i,k_1+i}+U_{i,k_1+i}-U_{i,k_1+i-1}) \\
   &\quad\quad\quad\,\,\, \times
    (U_{j-1,k_3+j-1}-U_{j,k_3+j}+U_{j,k_3+j}-U_{j,k_3+j-1}) \\
   &=\sum_{\substack{ i+j\le k_2+1 \\ i,j \ge2 }}
     (U_{i-1,k_1+i-1}-U_{i,k_1+i}+U_{k_1,k_1+i}-U_{k_1,k_1+i-1}) \\
   &\quad\quad\quad\,\,\, \times
     (U_{j-1,k_3+j-1}-U_{j,k_3+j}+U_{k_3,k_3+j}-U_{k_3,k_3+j-1}) \\
  &=\sum_{\substack{ i+j\le k_2+1 \\ i,j \ge2 }} 
   (F_{k_1,i}(X)-F_{k_1,i-1}(X))(F_{k_3,j}(X)-F_{k_3,j-1}(X)).
 \end{align*}
 Thus we get
 \begin{align*}
  &\mathit{O}_\mathcal{F} (k_1,k_2,k_3) \\
  &=F_{k_1,1}(X) F_{k_3,1}(X)
   -\sum_{\substack{ i+j\le k_2+1 \\ i,j \ge2 }} (F_{k_1,i}(X)-F_{k_1,i-1}(X))(F_{k_3,j}(X)-F_{k_3,j-1}(X)) \\
   &\quad -\sum_{\substack{ i+j=k_2+2 \\ i,j \ge2 }} F_{k_1,i-1}(X) F_{k_3,j-1}(X). 
 \end{align*}
 When $k_2\ge2$, we note that
 \begin{align*}
  &\sum_{\substack{ i+j\le k_2+1 \\ i,j \ge2 }} (F_{k_1,i}(X)-F_{k_1,i-1}(X))(F_{k_3,j}(X)-F_{k_3,j-1}(X)) \\
  &=F_{k_1,1}(X) F_{k_3,1}(X)
   -\sum_{\substack{ i+j=k_2+2 \\ i,j \ge2 }} 
    F_{k_1,i-1}(X) F_{k_3,j-1}(X)
   +\sum_{\substack{i+j=k_2+1 \\ i,j\ge2}} 
    F_{k_1,i}(X) F_{k_3,j}(X). 
 \end{align*}
 Hence we find
 \begin{align*}
  \mathit{O}_\mathcal{F} (k_1,k_2,k_3) 
  =
  \begin{cases}
   \displaystyle{ 
    -\sum_{\substack{i+j=k_2+1 \\ i,j\ge2}} 
    F_{k_1,i}(X) F_{k_3,j}(X) \qquad (k_2\ge2), 
   } \\ 
    F_{k_1,1}(X) F_{k_3,1}(X) \qquad\qquad\qquad\, (k_2=1).
  \end{cases}
 \end{align*}
 This finishes the proof. 
\end{proof}

%%%%%%%%%%%%%%%%%%%%%%%%%%%%%%%%%%%%%%%%%%%%%%%%%%%%%%%%%%%%%%%%%%%%%%%%%%%%%%%%%%%%%%%%%%%%%%%%%%%%
\section*{Acknowledgements}
This work was supported by JSPS KAKENHI Grant Numbers JP18J00982, JP18K03243, and JP18K13392.


\begin{thebibliography}{100}
\bibitem{HIMS18} 
M. Hirose, K. Imatomi, H. Murahara, and S. Saito, 
\textit{Ohno-type relations for classical and finite multiple zeta-star values}, 
arXiv:1806.09299.

\bibitem{Hof97}
M. E. Hoffman, 
\textit{The algebra of multiple harmonic series}, 
J. Algebra \textbf{194} (1997), 477--495.

\bibitem{Hof15} 
M. E. Hoffman,
\textit{Quasi-symmetric functions and mod p multiple harmonic sums}, 
Kyushu J. Math.\ \textbf{69} (2015), 345--366. 

\bibitem{IKZ06} 
K. Ihara, M. Kaneko, and D. Zagier, 
\textit{Derivation and double shuffle relations for multiple zeta values}, 
Compositio Math.\ \textbf{142} (2006), 307--338. 

\bibitem{Jar14} 
D. Jarossay, 
\textit{Double m\'elange des multiz\^etas finis et multiz\^etas sym\'etris\'es},
C.\ R.\ Math.\ Acad.\ Sci.\ Paris \textbf{352} (2014), 767--771. 

\bibitem{Kan19} 
M. Kaneko, 
\textit{An introduction to classical and finite multiple zeta values},
Publications math\'{e}matiques de {B}esan\c{c}on. {A}lg\`ebre et th\'{e}orie des nombres (to appear).

\bibitem{KZ19} 
M. Kaneko and D. Zagier, 
\textit{Finite multiple zeta values}, 
in preparation. 

\bibitem{Mur15}
H. Murahara, 
\textit{A note on finite real multiple zeta values}, 
Kyushu J. Math.\ \textbf{70} (2016), 345--366.

\bibitem{Ohn99} 
Y. Ohno, 
\textit{A generalization of the duality and sum formulas on the multiple zeta values}, 
J. Number Theory \textbf{74} (1999), 39--43. 

\bibitem{Oya15} 
K. Oyama, 
\textit{Ohno-type relation for finite multiple zeta values}, 
Kyushu J.\ Math.\ \textbf{72} (2018), 277--285. 

\bibitem{SW15}
S. Saito and N. Wakabayashi, 
\textit{Sum formula for finite multiple zeta values},
J. Math.\ Soc.\ Japan \textbf{67} (2015), 1069--1076.

\bibitem{SS17} K. Sakugawa and S. Seki, 
\textit{On functional equations of finite multiple polylogarithms}, 
J. Algebra \textbf{469} (2017), 323--357.

\end{thebibliography}
\end{document}